  \pgfplotsset{compat = 1.13,
    colormap name = viridis,
    unbounded coords = jump}
\tikzset{every picture/.style={/utils/exec={\normalfont}}}
\definecolor{myRed}{HTML}{E34A33}
\definecolor{myBlue}{HTML}{0571B0}
\definecolor{myBrown}{HTML}{A6611A}
\definecolor{linkBlue}{HTML}{0055C9}
\definecolor{linkRed}{HTML}{FF1A24}
\definecolor{linkPurple}{HTML}{6200D9}
\newcommand{\M}{\ensuremath{\mathbb{M}}}
\newcommand{\R}{\ensuremath{\mathbb{R}}}
\newcommand{\C}{\ensuremath{\mathbb{C}}}
\newcommand{\cB}{\ensuremath{\mathcal{B}}}
\newcommand{\cC}{\ensuremath{\mathcal{C}}}
\newcommand{\cK}{\ensuremath{\mathcal{K}}}
\newcommand{\cN}{\ensuremath{\mathcal{N}}}
\newcommand{\cH}{\ensuremath{\mathcal{H}}}
\newcommand{\tcN}{\ensuremath{\skew{2}\widetilde{\cN}}}
\newcommand{\cBr}{\ensuremath{\skew{4}\widehat{\cB}}}
\newcommand{\cCr}{\ensuremath{\skew{4}\widehat{\cC}}}
\newcommand{\cKr}{\ensuremath{\skew{4}\widehat{\cK}}}
\newcommand{\cNr}{\ensuremath{\skew{2}\widehat{\cN}}}
\newcommand{\cHr}{\ensuremath{\skew{4}\widehat{\cH}}}
\newcommand{\Gr}{\ensuremath{\widehat{G}}}
\newcommand{\mur}{\ensuremath{\skew{2}\hat{\mu}}}
\newcommand{\yr}{\ensuremath{\hat{y}}}
\newcommand{\xr}{\ensuremath{\hat{x}}}
\newcommand{\Mr}{\ensuremath{\widehat{M}}}
\newcommand{\Dr}{\ensuremath{\widehat{D}}}
\newcommand{\Kr}{\ensuremath{\widehat{K}}}
\newcommand{\Nr}{\ensuremath{\widehat{N}}}
\newcommand{\Bur}{\ensuremath{\widehat{B}_{\mathrm{u}}}}
\newcommand{\Cpr}{\ensuremath{\widehat{C}_{\mathrm{p}}}}
\newcommand{\Cvr}{\ensuremath{\widehat{C}_{\mathrm{v}}}}
\newcommand{\trans}{\ensuremath{\mkern-1.5mu\mathsf{T}}}
\newcommand{\herm}{\ensuremath{\mathsf{H}}}
\DeclareMathOperator{\mspan}{span}
\DeclareMathOperator{\err}{err}
\DeclareMathOperator{\mtime}{t}
\DeclareMathOperator{\mfreq}{f}
\DeclareMathOperator{\td}{1}
\DeclareMathOperator{\msd}{2}
\theoremstyle{plain}\newtheorem{theorem}{Theorem}
\theoremstyle{plain}
\theoremstyle{definition}\newtheorem{remark}{Remark}
\begin{document}

\title{Structure-Preserving Interpolation for Model Reduction of %
  Parametric Bilinear Systems}
  
\author{%
  Peter~Benner\thanks{
   Max Planck Institute for Dynamics of Complex Technical Systems,
   Sandtorstr. 1, 39106 Magdeburg, Germany.\newline
   E-mail: \texttt{\href{mailto:benner@mpi-magdeburg.mpg.de}%
     {benner@mpi-magdeburg.mpg.de}}
   \newline
   Otto von Guericke University, Faculty of Mathematics,
   Universit{\"a}tsplatz 2, 39106 Magdeburg, Germany.\newline
   E-mail: \texttt{\href{mailto:peter.benner@ovgu.de}%
     {peter.benner@ovgu.de}}\newline
   ORCID: \texttt{\href{https://orcid.org/0000-0003-3362-4103}%
     {0000-0003-3362-4103}}} \and
  Serkan~Gugercin\thanks{
 Department of Mathematics  and
Computational Modeling and Data Analytics Division,
Academy of Integrated Science,
Virginia Tech, Blacksburg, VA 24061, USA.\newline
   E-mail: \texttt{\href{mailto:gugercin@vt.edu}%
     {gugercin@vt.edu}}\newline
   ORCID: \texttt{\href{https://orcid.org/0000-0003-4564-5999}%
     {0000-0003-4564-5999}}} \and
  Steffen~W.~R.~Werner\thanks{
    Max Planck Institute for Dynamics of Complex
    Technical Systems, Sandtorstr. 1, 39106 Magdeburg, Germany.\newline
    E-mail: \texttt{\href{mailto:werner@mpi-magdeburg.mpg.de}%
      {werner@mpi-magdeburg.mpg.de}}\newline
   ORCID: \texttt{\href{https://orcid.org/0000-0003-1667-4862}%
     {0000-0003-1667-4862}}}
}

\date{~}


\makeatletter
\twocolumn[
  \begin{@twocolumnfalse}
    \maketitle
    \vspace{-2\baselineskip}
    \begin{abstract}
      In this paper, we present an interpolation framework for  
      structure-preserving model order reduction of parametric bilinear
      dynamical systems.
      We introduce a general setting, covering a broad variety of different 
      structures for parametric bilinear systems, and then provide conditions
      on projection spaces for the interpolation of structured subsystem
      transfer functions such that the system structure and parameter
      dependencies are preserved in the reduced-order model.
      Two benchmark examples with different parameter dependencies are used to 
      demonstrate the theoretical analysis.
      
      \vspace{1em}
      \noindent\textbf{Keywords:}
        model order reduction,
        parametric bilinear systems,
        moment matching,
        structure-preserving approximation,
        structured parametric interpolation
    \end{abstract}
    \vspace{\baselineskip}
  \end{@twocolumnfalse}
]{
  \renewcommand{\thefootnote}%
    {\fnsymbol{footnote}}
  \footnotetext[1]{%
    Max Planck Institute for Dynamics of Complex Technical Systems,
     Sandtorstr. 1, 39106 Magdeburg, Germany.\newline
     E-mail: \texttt{\href{mailto:benner@mpi-magdeburg.mpg.de}%
       {benner@mpi-magdeburg.mpg.de}}
     \newline
     Otto von Guericke University, Faculty of Mathematics,
     Universit{\"a}tsplatz 2, 39106 Magdeburg, Germany.\newline
     E-mail: \texttt{\href{mailto:peter.benner@ovgu.de}%
       {peter.benner@ovgu.de}}\newline
     ORCID: \texttt{\href{https://orcid.org/0000-0003-3362-4103}%
       {0000-0003-3362-4103}}}
  \renewcommand{\thefootnote}%
    {\fnsymbol{footnote}}
  \footnotetext[2]{%
    Department of Mathematics and Computational Modeling and Data Analytics
    Division, Academy of Integrated Science, Virginia Tech, Blacksburg,
    VA 24061, USA.\newline
    E-mail: \texttt{\href{mailto:gugercin@vt.edu}%
      {gugercin@vt.edu}}\newline
    ORCID: \texttt{\href{https://orcid.org/0000-0003-4564-5999}%
      {0000-0003-4564-5999}}}
  \renewcommand{\thefootnote}%
    {\fnsymbol{footnote}}
  \footnotetext[3]{%
    Max Planck Institute for Dynamics of Complex
    Technical Systems, Sandtorstr. 1, 39106 Magdeburg, Germany.\newline
    E-mail: \texttt{\href{mailto:werner@mpi-magdeburg.mpg.de}%
      {werner@mpi-magdeburg.mpg.de}}\newline
    ORCID: \texttt{\href{https://orcid.org/0000-0003-1667-4862}%
      {0000-0003-1667-4862}}}
}
\makeatother


\section{Introduction}%
\label{sec:intro}

Design and control processes usually involve simulating systems of differential
equations describing the underlying dynamics.
In the setting of nonlinear or stochastic processes,
an important class of such systems are parametric bilinear time-invariant
systems; see, e.g.,~\cite{morAlbFB93, Moh73, Ou10, SapSH19} for some
applications of bilinear systems.
In most cases, these bilinear systems  have special 
structures resulting from the underlying physical model and the dynamics are
parameter dependent. For example, in case of parametric bilinear mechanical
systems, they have the form 
\begingroup
\small
\begin{align} \label{eqn:bsosys}
  \begin{aligned}
  & M(\mu) \ddot{x}(t;\mu) + D(\mu) \dot{x}(t;\mu) + K(\mu) x(t;\mu) =
    B_{\mathrm{u}}(\mu)u(t) \\
  & \quad{}+{} \sum\limits_{j = 1}^{m} N_{\mathrm{p},j}(\mu) x(t;\mu)
      u_{j}(t)+{}\sum\limits_{j = 1}^{m}N_{\mathrm{v},j}(\mu) \dot{x}(t;\mu)
      u_{j}(t),\\
  & y(t;\mu) = C_{\mathrm{p}}(\mu) x(t;\mu) + C_{\mathrm{v}}(\mu) 
      \dot{x}(t;\mu),
  \end{aligned}
\end{align}
\endgroup
where $M(\mu)$, $D(\mu)$, $K(\mu)$, $N_{\mathrm{p}, j}(\mu)$,
$N_{\mathrm{v}, j}(\mu) \in \R^{n \times n}$, for $j = 1, \ldots, m$;
$B_{\mathrm{u}}(\mu) \in \R^{n \times m}$ and
$C_{\mathrm{p}}(\mu), C_{\mathrm{v}}(\mu) \in \R^{p \times n}$ are constant
matrices; and $\mu \in \M \subset \R^{d}$ represents the (constant) parameters
affecting the dynamics.
In~\cref{eqn:bsosys},
\begin{align*}
  u(t) = \begin{bmatrix} u_{1}(t), & u_{2}(t), & \ldots, & u_{m}(t)
    \end{bmatrix}^{\trans} \in \R^{m}
\end{align*}
denotes the inputs (forcing), $y(t;\mu)\in \R^p$ the outputs (measurements),
and ${x}(t;\mu)  \in \R^{n \times n}$  the internal variables. The parameter
$\mu$ may represent variations in,  e.g., material properties
or system geometry.

Due to an increasing demand for accuracy in the modeling stage, systems as
in~\cref{eqn:bsosys} become larger and larger, e.g., $n>10^6$,  imposing
overwhelming demands on computational resources like time and memory.
The situation is even more prominent in the parametric problems we consider here 
due to the need to evaluate/simulate~\cref{eqn:bsosys} for many samples of
$\mu$.
The aim of parametric model order reduction is to construct a cheap-to-evaluate 
approximation of the input-to-output behavior of the original system by reducing 
the state-space dimension, i.e., the number of equations $n$, in such a way that
the reduced model provides a high-fidelity approximation to the original one for
the parameter range of interest.
Additionally, the reduced-order model should have the same internal structure
as well as the parameter dependencies as the original to retain the underlying
physical structure.
For example, for the system~\cref{eqn:bsosys}, the structure-preserving
parametric reduced-order model will have the form
\begingroup
\small
\begin{align} \label{eqn:bsosysred}
  \begin{aligned}
    & \Mr(\mu) \ddot{\xr}(t;\mu) + \Dr(\mu) \dot{\xr}(t;\mu)
      + \Kr(\mu) \xr(t;\mu) =\Bur(\mu) u(t)\\
    & \quad{}+{} \sum\limits_{j = 1}^{m} \Nr_{\mathrm{p},j}(\mu) \xr(t;\mu)
      u_{j}(t) + \sum\limits_{j = 1}^{m} \Nr_{\mathrm{v},j}(\mu) 
      \dot{\xr}(t;\mu) u_{j}(t),\\
    & \yr(t;\mu) = \Cpr(\mu) \xr(t;\mu) + \Cvr(\mu) \dot{\xr}(t;\mu),
  \end{aligned}
\end{align}
\endgroup
with $\Mr(\mu), \Dr(\mu), \Kr(\mu), \Nr_{\mathrm{p}, j}(\mu),
\Nr_{\mathrm{v}, j}(\mu) \in \R^{r \times r}$, for $j = 1, \ldots, m$,
$\Bur(\mu) \in \R^{r \times m}$, $\Cpr(\mu), \Cvr(\mu) \in \R^{p \times r}$,
and $r \ll n$.
Note that the reduced-order model~\cref{eqn:bsosysred} has the same structure
as~\cref{eqn:bsosys} and can be interpreted as a 
physically meaningful reduced-order mechanical system. 
The structure preservation can also be very beneficial in terms of computational 
speed and accuracy; see, e.g.,~\cite{morBenGW20, morBenKS13}.

For parametric \emph{unstructured (classical)} bilinear systems, i.e.,
for systems of the form
\begingroup
\small
\begin{align} \label{eqn:bsys}
 \begin{aligned}
   E(\mu) \dot{x}(t; \mu) & = A(\mu) x(t; \mu) + B(\mu) u(t) \\
   & \quad{}+{} \sum\limits_{j = 1}^{m} N_{j}(\mu) x(t; \mu) u_{j}(t), \\
   y(t; \mu) & = C(\mu) x(t; \mu),
 \end{aligned}
\end{align}
\endgroup
the interpolatory parametric model reduction framework
was developed in~\cite{morRodGB18} by synthesizing the interpolation theory for 
parametric linear dynamical systems~\cite{morAntBG20, morBauBBetal11} with the 
subsystem interpolation approaches for bilinear
systems~\cite{morAntBG20, morBaiS06, morBreD10, morConI07, morFenB07a}.
Recently in~\cite{morBenGW20}, the structured interpolation framework
of~\cite{morBeaG09} for linear dynamical systems has been
extended to the case of structured bilinear systems for \emph{non-parametric}
structured bilinear systems.
In this paper, we will extend this interpolation theory to the
case of \emph{structured parametric} bilinear systems.

In \Cref{sec:basics}, we introduce the basic mathematical concepts and notation. 
We prove the structure-preserving interpolation framework
for parametric bilinear systems in \Cref{sec:strint}.
The established theory is then extended in \Cref{sec:paramsens} to the 
interpolation of parameter sensitivities.
\Cref{sec:examples} illustrates the analysis in two numerical benchmark
examples, followed by conclusions in \Cref{sec:conclusions}.


\section{Mathematical preliminaries}%
\label{sec:basics}

Under some mild assumptions, the output of the bilinear 
system~\cref{eqn:bsys} can be rewritten in terms of a Volterra 
series, i.e.,
\begingroup
\small
\begin{align*}
  y(t; \mu) & = \sum\limits_{k = 1}^{\infty} \int\limits_{0}^{t}
    \int\limits_{0}^{t_{1}} \ldots \int\limits_{0}^{t_{k-1}}
    g_{k}(t_{1}, \ldots, t_{k}, \mu) \\
  & \quad{}\times{}  \left( u(t - \sum\limits_{i = 1}^{k}t_{i}) \otimes \cdots 
    \otimes u(t - t_{1}) \right) \mathrm{d}t_{k} \cdots \mathrm{d}t_{1},
\end{align*}
\endgroup
where $g_{k}$ denotes the $k$-th regular Volterra kernel; see,
e.g., \cite{Rug81}.
Using the multivariate Laplace transformation~\cite{Rug81}, the regular Volterra 
kernels yield the frequency representation~\cref{eqn:bsystf},
as the $k$-th regular transfer function of~\cref{eqn:bsys}, where
$N(\mu) = \begin{bmatrix} N_{1}(\mu), & \ldots, & N_{m}(\mu) \end{bmatrix}$.
The model reduction theory in~\cite{morRodGB18} is based on the interpolation 
of~\cref{eqn:bsystf}, i.e., unstructured (classical) parametric subsystems.

\begin{figure*}
\begingroup
\small
\begin{align} \label{eqn:bsystf}
  G_{k}(s_{1}, \ldots, s_{k}, \mu) & = C(\mu) (s_{k}E(\mu) - A(\mu))^{-1} 
    \left( \prod\limits_{j = 1}^{k-1} (I_{m^{j-1}} \otimes N(\mu))
    (I_{m^{j}} \otimes (s_{k-j}E(\mu) - A(\mu))^{-1}) \right)
    (I_{m^{k-1}} \otimes B(\mu)),~~k \geq 1 \\ \label{eqn:tfmimo}
  G_{k}(s_{1}, \ldots, s_{k}, \mu) & = \cC(s_{k}, \mu)\cK(s_{k}, \mu)^{-1}
    \left(\prod\limits_{j = 1}^{k-1} (I_{m^{j-1}} \otimes \cN(s_{k-j}, \mu))
    (I_{m^{j}} \otimes \cK(s_{k-j}, \mu)^{-1}) \right)
    (I_{m^{k-1}} \otimes \cB(s_{1}, \mu)),~~k \geq 1 
    \\ \label{eqn:tfmimored}
  \Gr_{k}(s_{1}, \ldots, s_{k}, \mu) & = \cCr(s_{k}, \mu)\cKr(s_{k}, \mu)^{-1}
    \left(\prod\limits_{j = 1}^{k-1} (I_{m^{j-1}} \otimes \cNr(s_{k-j}, \mu))
    (I_{m^{j}} \otimes \cKr(s_{k-j}, \mu)^{-1}) \right)
    (I_{m^{k-1}} \otimes \cBr(s_{1}, \mu)),~~k \geq 1 
\end{align}
\endgroup
\hrule
\end{figure*}

In this paper, we consider a much more general setting of multivariate transfer 
functions.
The interpolation of structured transfer functions for linear systems was
developed in \cite{morBeaG09} and then extended to the parametric setting
in~\cite{morAntBG10}.
As the structured transfer functions were recently extended to non-parametric
bilinear systems in~\cite{morBenGW20}, we  consider here
\emph{structured parametric multivariate transfer functions} of the 
form \cref{eqn:tfmimo} with frequency points
$s_{1},$ $\ldots,$ $s_{k} \in \C$, constant parameters
$\mu \in \M \subset \R^{d}$,  
$\cN(s, \mu) = \begin{bmatrix} \cN_{1}(s, \mu), & \ldots, & 
\cN_{m}(s, \mu) \end{bmatrix}$,
and matrix functions
\begin{align*}
  \begin{aligned}
    \cC\colon \C \times \M \rightarrow \C^{p \times n}, &&
    \cK\colon \C \times \M \rightarrow \C^{n \times n}, \\
    \cB\colon \C \times \M \rightarrow \C^{n \times m}, &&
    \cN_{j}\colon \C \times \M \rightarrow \C^{n \times n},
  \end{aligned}
\end{align*}
for $j = 1, \ldots, m$.
For the  parametric bilinear mechanical
systems~\cref{eqn:bsosys}, these matrix functions are realized by
\begin{align*}
  \begin{aligned}
    \cK(s, \mu) & = s^{2} M(\mu) + s D(\mu) + K(\mu),\\
      \cN_{j}(s, \mu) & = N_{\mathrm{p}, j}(\mu) + s N_{\mathrm{v}, j}(\mu)
      ~\text{for}~j = 1, \ldots, m, \\
    \cB(s, \mu) & = B_{\mathrm{u}},~\text{and}~\cC(s, \mu) = 
      C_{\mathrm{p}}(\mu) + s C_{\mathrm{v}}(\mu).
  \end{aligned}
\end{align*}
The reduced-order models are then computed by projection: given model reduction 
bases $V, W \in \C^{n \times r}$, the reduced-order model $\Gr$ is described 
by the reduced-order matrix functions
\begingroup
\small
\begin{align} \label{eqn:project}
  \begin{aligned}
    \cCr(s, \mu) & = \cC(s, \mu) V, &
      \cKr(s, \mu) & = W^{\herm} \cK(s, \mu) V, \\
    \cBr(s, \mu) & = W^{\herm} \cB(s, \mu), &
      \cNr_{j}(s, \mu) & = W^{\herm} \cN_{j}(s, \mu) V,
  \end{aligned}
\end{align}
\endgroup
for $j = 1, \ldots, m$.
In general, every matrix-valued function can be affinely decomposed with respect 
to its arguments and we can write
\begin{align*}
  \cK(s, \mu) = \sum\limits_{j = 1}^{n_{\cK}} h_{\cK, j}(s, \mu) \cK_{j},
\end{align*}
where $h_{\cK, j}\colon \C \times \M \rightarrow \C$ are scalar
functions depending on frequency and parameter, and $\cK_{j} \in
\C^{n \times n}$ are constant matrices, for $j = 1, \ldots, n_{\cK}$.
In the worst-case scenario, we have $n_{\cK} = n^2$ and $\cK_j$'s are the
elementary matrices.
However, we are interested in cases where $n_{\cK}$ is modest, which is the
case in most applications. 
Using the affine decomposition, the reduced-order matrix function is then given
by
\begin{align*}
  \begin{aligned}
    \cKr(s, \mu) & = W^{\herm} \cK (s, \mu) V =
      \sum\limits_{j = 1}^{n_{\cK}} h_{\cK, j}(s, \mu) W^{\herm} \cK_{j} V\\
    &  = \sum\limits_{j = 1}^{n_{\cK}} h_{\cK, j}(s, \mu) \cKr_{j}.
  \end{aligned}
\end{align*}
This works analogously for the other matrix functions in~\cref{eqn:project},
which gives a computable realization of the reduced-order model. Since the
functions $h_{\cK, j}$ stay unchanged, the internal structure and 
parameter dependency of the original matrix functions, (and thus of
the original system) are retained.

In the following, we will use an abbreviation for the notion of partial 
derivatives, namely we denote
\begin{align*}
  \partial_{s_{1}^{j_{1}} \cdots s_{k}^{j_{k}}} f(z_{1}, \ldots, z_{k}) & :=
    \frac{\partial^{j_{1} + \ldots + j_{k}} f}{\partial s_{1}^{j_{1}} \cdots
    \partial s_{k}^{j_{k}}} (t_{1}, \ldots, t_{k}),
\end{align*}
for the differentiation of an analytic function $f\colon \C^{k}
\rightarrow \C^{\ell}$ with respect to the variables $s_{1}, \ldots,
s_{k}$ and evaluated at $z_{1},$ $\ldots,$ $z_{k}$.
Also, we denote the vertical concatenation of the bilinear terms by
$\tcN(s, \mu)  = \begin{bmatrix} \cN_{1}(s, \mu) \\ \vdots \\
\cN_{m}(s, \mu) \end{bmatrix}$.


\section{Structured interpolation}%
\label{sec:strint}

Interpolatory model reduction has been one of the most commonly used and
effective approaches to model reduction and shown to provide locally optimal
reduced models for linear, bilinear, quadratic-bilinear dynamical systems;
we refer the reader to \cite{morAntBG20, morBauBF14, morScaA17} and references
therein for details on interpolatory model reduction for linear and
nonlinear systems.
In this setting, one chooses  $V$ and $W$ in~\cref{eqn:project} such that the 
re\-du\-ced-or\-der transfer functions interpolate the transfer functions of the 
original system at selected points.
In the setting of parametric structured multivariate transfer functions $G_k$
in~\cref{eqn:tfmimo}, we want to construct $V$ and $W$ such that the reduced
transfer functions $\Gr_k$ in~\cref{eqn:tfmimored} satisfy
\begin{align} \label{eqn:lag}
  G_{k}(\sigma_{1}, \ldots, \sigma_{k}, \mur) &
    = \Gr_{k}(\sigma_{1}, \ldots, \sigma_{k}, \mur)~~\text{and} \\
  \label{eqn:der}
  \nabla G_{k}(\sigma_{1}, \ldots, \sigma_{k}, \mur) &
    = \nabla \Gr_{k}(\sigma_{1}, \ldots, \sigma_{k}, \mur)
\end{align}
for given frequency interpolation points $\sigma_{1},$ $\ldots,$ 
$\sigma_{k} \in \C$ and the parameter interpolation point $\mu \in \M$
where $\nabla G_{k}$ denotes the Jacobian matrix
\begin{align*}
  \nabla G_{k} = \begin{bmatrix} \partial_{s_{1}} G_{k}, & \ldots, & 
    \partial_{s_{k}} G_{k}, & \partial_{\mu_{1}} G_{k}, & \ldots, & 
    \partial_{\mu_{d}} G_{k} \end{bmatrix}.
\end{align*}
We emphasize  that for multi-input/multi-output (MIMO) systems we consider here,
transfer functions $G_k$ are matrix valued.
Therefore, conditions in~\cref{eqn:lag}~and~\cref{eqn:der} enforce matrix
interpolation.
This is not usually needed.
For MIMO linear dynamical systems, for example, one enforces tangential
interpolation, meaning matrix-interpolation along selected
directions~\cite{morAntBG20}.
However, for brevity and to keep  the notation concise, we will focus on
matrix interpolation. 

Even though we have only listed two sets of  interpolation conditions
in~\cref{eqn:lag,eqn:der}, \Cref{thm:mtxvw,thm:mtxvwhermite}
below will show how to construct $V$ and $W$ to enforce interpolation for more 
general cases, including higher-order partial derivatives.
The recent work~\cite{morBenGW20} showed how to enforce~\cref{eqn:lag,eqn:der}
for \emph{non-parametric} structured bilinear systems.
Our theory below will extend these results to the parametric case.
Note that the first condition~\cref{eqn:lag} does not involve any
differentiation with respect to the parameter $\mur$ and can be viewed as
interpolation for a fixed parameter $\mu = \mur$.
Therefore, we might expect that the subspace constructions
from~\cite{morBenGW20} for the non-parametric problem might yield the desired
subspaces. 
This is indeed what we discuss first in \Cref{thm:mtxvw,thm:mtxvwhermite}.
However, the second condition~\cref{eqn:der} involves matching sensitivity with
respect to the parameter as well, which will be discussed in
\Cref{sec:paramsens}.

\begin{theorem}[Structured matrix interpolation]%
  \label{thm:mtxvw}
  Let $G$ be a parametric bilinear system, with its structured subsystem
  transfer functions $G_k$ in~\cref{eqn:tfmimo}, and  $\Gr$ be the
  re\-du\-ced-order parametric bilinear system, constructed as
  in~\cref{eqn:project} with its  subsystem transfer functions $\Gr_k$
  in~\cref{eqn:tfmimored}.
  Let  the matrix functions 
  $\cC(s, \mu)$, $\cK(s, \mu)^{-1}$, $\cN(s, \mu)$, $\cB(s, \mu)$, and
  $\cKr(s, \mu)^{-1}$ be defined for given sets of frequency interpolation
  points $\sigma_{1}, \ldots, \sigma_{k} \in \C$ and $\varsigma_{1}, \ldots,
  \varsigma_{\theta} \in \C$, and the parameter interpolation point
  $\mur \in \M$.
  \begin{enumerate}[label = (\alph*)]
    \item If $V$ is constructed such that
      \begin{align*}
        \mspan(V) \supseteq \mspan([V_{1}, \ldots, V_{k}]),
      \end{align*}
      where
      \begin{align} \label{eqn:V1Thm1}
        \begin{aligned}
          V_{1} & = \cK(\sigma_{1}, \mur)^{-1}
            \cB(\sigma_{1}, \mur)~\text{and}\\
          V_{j} & = \cK(\sigma_{j}, \mur)^{-1} \cN(\sigma_{j-1}, \mur)
            (I_{m} \otimes V_{j-1}),
        \end{aligned}
      \end{align}
      for $2 \leq j \leq k$,
      then the following interpolation conditions hold true:
      \begin{align}  \label{eqn:conda}
        G_{j}(\sigma_{1}, \ldots, \sigma_{j}, \mur)
          = \Gr_{j}(\sigma_{1}, \ldots, \sigma_{j}, \mur),
      \end{align}
      for $j = 1, \ldots, k$.
    \item If $W$ is constructed such that
      \begin{align*}
        \mspan(W) \supseteq \mspan([W_{1}, \ldots, W_{\theta}]),
      \end{align*}
      where
      \begin{align*}
        \begin{aligned}
          W_{1} & = \cK(\varsigma_{\theta}, \mur)^{-\herm}
            \cC(\varsigma_{\theta}, \mur)^{\herm}~\text{and}\\
          W_{i} & = \cK(\varsigma_{\theta-i+1}, \mur)^{-\herm}
            \tcN(\varsigma_{\theta-i+1}, \mur)^{\herm} (I_{m} \otimes W_{i-1}),
        \end{aligned}
      \end{align*}
      for $2 \leq i \leq \theta$, then the following interpolation conditions 
      hold true:
      \begin{align} \label{eqn:condb}
        G_{i}(\varsigma_{\theta-i+1}, \ldots, \varsigma_{\theta}, \mur)
          = \Gr_{i}(\varsigma_{\theta-i+1}, \ldots, \varsigma_{\theta}, \mur),
      \end{align}
      for $i = 1, \ldots, \theta$.
    \item Let $V$ be constructed as in Part (a) and $W$ as in Part (b).
      Then, in addition to~\cref{eqn:conda}~and~\cref{eqn:condb}, the 
      interpolation conditions
      \begin{align} \label{eqn:condc}
        \begin{aligned}
          & G_{q + \eta}(\sigma_{1}, \ldots, \sigma_{q},
            \varsigma_{\theta-\eta+1}, \ldots, \varsigma_{\theta}, \mur) \\
          & = \Gr_{q + \eta}(\sigma_{1}, \ldots, \sigma_{q},
            \varsigma_{\theta-\eta+1}, \ldots, \varsigma_{\theta}, \mur), 
        \end{aligned}
      \end{align}
      hold for $1 \leq q \leq k$ and  $1 \leq \eta \leq \theta$.
  \end{enumerate}
\end{theorem}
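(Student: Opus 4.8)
The key observation is that none of the conditions~\cref{eqn:conda,eqn:condb,eqn:condc} involves a derivative with respect to $\mu$: every equality is evaluated at the single parameter value $\mur$. The plan is therefore to fix $\mu = \mur$ once and for all, so that $\cC(\cdot,\mur)$, $\cK(\cdot,\mur)$, $\cB(\cdot,\mur)$, $\cN(\cdot,\mur)$ become functions of the frequency only; then $G_k(\cdot,\ldots,\cdot,\mur)$ is exactly a non-parametric structured bilinear subsystem transfer function as in~\cite{morBenGW20}, and $\Gr_k(\cdot,\ldots,\cdot,\mur)$ is its Petrov--Galerkin projection through $V,W$ via~\cref{eqn:project}. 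The subspace recipes in (a)--(c) coincide verbatim with those of the non-parametric result, so the statement follows as a corollary; below I outline the direct argument, carrying $\mur$ along as a silent passenger. The two workhorses are the standard interpolatory-projector identities (cf.~\cite{morBeaG09}): if the columns of $\cK(\sigma,\mur)^{-1} X$ lie in $\mspan(V)$ and $\cKr(\sigma,\mur) = W^{\herm}\cK(\sigma,\mur) V$ is invertible, then $\cK(\sigma,\mur)^{-1} X = V \cKr(\sigma,\mur)^{-1} W^{\herm} X$; dually, if the columns of $\cK(\varsigma,\mur)^{-\herm} Y$ lie in $\mspan(W)$, then $Y^{\herm}\cK(\varsigma,\mur)^{-1} = Y^{\herm} V \cKr(\varsigma,\mur)^{-1} W^{\herm}$. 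Both follow by writing the spanned quantity as $Vz$ (resp.\ $Ww$), left-multiplying by $\cK$ (resp.\ $\cK^{\herm}$) and then by $W^{\herm}$ (resp.\ $V^{\herm}$), and solving for $z$ (resp.\ $w$).

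For part (a), unrolling the product in~\cref{eqn:tfmimo} with the Kronecker mixed-product rule gives $G_k(s_1,\ldots,s_k,\mu) = \cC(s_k,\mu) R_k$, where $R_1 = \cK(s_1,\mu)^{-1}\cB(s_1,\mu)$ and $R_j = \cK(s_j,\mu)^{-1}\cN(s_{j-1},\mu)(I_m\otimes R_{j-1})$; at the $\sigma$-points this recursion is precisely~\cref{eqn:V1Thm1}, so $R_j = V_j \in \mspan(V)$, and likewise $\Gr_j(\sigma_1,\ldots,\sigma_j,\mur) = \cCr(\sigma_j,\mur)\widehat{V}_j$ with $\widehat{V}_j$ the analogous recursion in the reduced matrix functions. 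I would then prove by induction on $j$ that $V_j = V\widehat{V}_j$: the base case is the first projector identity applied with $X = \cB(\sigma_1,\mur)$; in the step, $V_j\in\mspan(V)$ allows applying that identity with $X = \cN(\sigma_{j-1},\mur)(I_m\otimes V_{j-1})$, and the induction hypothesis together with $W^{\herm}\cN(\sigma_{j-1},\mur)(I_m\otimes V) = \cNr(\sigma_{j-1},\mur)$ collapses the result to $V\widehat{V}_j$. Then $G_j(\sigma_1,\ldots,\sigma_j,\mur) = \cC(\sigma_j,\mur)V\widehat{V}_j = \cCr(\sigma_j,\mur)\widehat{V}_j = \Gr_j(\sigma_1,\ldots,\sigma_j,\mur)$, which is~\cref{eqn:conda}.

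For part (b), I would decompose $G_i(\varsigma_{\theta-i+1},\ldots,\varsigma_\theta,\mur)$ into $m^{i-1}$ scalar-index blocks of the form $\cC(\varsigma_\theta,\mur)\cK(\varsigma_\theta,\mur)^{-1}\cN_{j_1}(\varsigma_{\theta-1},\mur)\cK(\varsigma_{\theta-1},\mur)^{-1}\cdots\cN_{j_{i-1}}(\varsigma_{\theta-i+1},\mur)\cK(\varsigma_{\theta-i+1},\mur)^{-1}\cB(\varsigma_{\theta-i+1},\mur)$ and apply the second projector identity to the $\cK^{-1}$ factors from the innermost one outward. The point is that the Hermitian transpose of each prefix $\cC(\varsigma_\theta,\mur)\cK(\varsigma_\theta,\mur)^{-1}\cN_{j_1}(\varsigma_{\theta-1},\mur)\cdots\cN_{j_\ell}(\varsigma_{\theta-\ell},\mur)$, pre-multiplied by $\cK(\varsigma_{\theta-\ell},\mur)^{-\herm}$, is exactly one column of $W_{\ell+1}$ and hence lies in $\mspan(W)$; each application thus replaces a $\cK^{-1}$ by $V\cKr^{-1}W^{\herm}$, and the inserted $V$ turns the next occurrence $W^{\herm}\cN_j(\cdot,\mur)V$ into $\cNr_j(\cdot,\mur)$. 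After $i$ steps the block equals the matching block of $\Gr_i$, which is~\cref{eqn:condb}.

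Part (c) merges the two arguments: in each scalar-index block of $G_{q+\eta}(\sigma_1,\ldots,\sigma_q,\varsigma_{\theta-\eta+1},\ldots,\varsigma_\theta,\mur)$, the ``right tail'' starting at $\cK(\sigma_q,\mur)^{-1}$ and running inward is a column of $V_q$, hence equals $V$ times a column of $\widehat{V}_q$ by part (a), while the ``left head'' built from $\varsigma_{\theta-\eta+1},\ldots,\varsigma_\theta$ is collapsed exactly as in part (b); the two halves join at the connecting $\cN$-factor at frequency $\sigma_q$, since the $V$ inserted by the left-head reduction meets the $V$ supplied by the right tail, producing $W^{\herm}\cN(\sigma_q,\mur)(I_m\otimes V) = \cNr(\sigma_q,\mur)$. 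I expect the main obstacle to be precisely this bookkeeping in part (c): keeping the nested $I_{m^{\bullet}}\otimes(\cdot)$ factors aligned while the $V$-side identities are applied from the innermost level and the $W$-side identities from the outermost level, and checking that each prefix lands in the prescribed span --- which in parts (b) and (c) also requires tracking the shuffle between the row concatenation $\cN$ and the column concatenation $\tcN$ appearing in the recursion that defines the $W_i$. None of this touches the parameter, so once the non-parametric bookkeeping of~\cite{morBenGW20} is in place, appending the fixed argument $\mur$ everywhere completes the proof.
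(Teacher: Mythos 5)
Your proposal is correct and follows essentially the same route as the paper: fix $\mu=\mur$ so that the claim reduces to the non-parametric structured result of \cite{morBenGW20}, and then verify the interpolation via the oblique-projector identities $\cK^{-1}X=V\cKr^{-1}W^{\herm}X$ (equivalently $P_{\mathrm{V}}V_j=V_j$ with $P_{\mathrm{V}}=V\cKr^{-1}W^{\herm}\cK$) and their $W$-side duals, applied recursively through the Kronecker-structured product. The paper only writes out Part~(a) for $k=2$ and defers the rest to the cited theorem, whereas you sketch the full induction and the blockwise bookkeeping for Parts~(b) and~(c), but the underlying argument is identical.
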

\begin{proof} \allowdisplaybreaks
  Given the fixed parameter $\mur \in \M$, the matrix functions $\cC(s, \mur)$,
  $\cK(s, \mur)$, $\cN(s, \mur)$ and $\cB(s, \mur)$ can be viewed as
  the realization of a non-pa\-ra\-me\-tric  bilinear system.
  Then, the interpolation conditions~\cref{eqn:conda,eqn:condb,eqn:condc} can be 
  considered as subsystem interpolation of a non-parametric bilinear system as 
  these conditions do not involve any variation/sensitivity with respect to 
  $\mu$.
  Therefore, the subspace conditions in~\cite[Theorem~8]{morBenGW20}, for
  interpolating a non-parametric structured bilinear system, apply here as
  well, which are precisely the subspace conditions listed in Parts (a)--(c). 
  However, to make the paper self-contained  and the proof of 
  \Cref{thm:mtxvwtwo} in \Cref{sec:paramsens} easier to follow, we will still 
  prove Part~(a) for $k = 2$.
  By induction over $k$, the rest of the result in (a) follows directly using the
  same arguments.
  Using~\cref{eqn:tfmimored}, the second reduced-order transfer 
  function is given by
  \begin{align*}
    \Gr_{2}(\sigma_{1}, \sigma_{2}, \mur) & = \cCr(\sigma_{2}, \mur)
      \cKr(\sigma_{2}, \mur)^{-1} \cNr(\sigma_{1}, \mur)\\
    & \quad{}\times{} (I_{m} \otimes \cKr(\sigma_{1}, \mur)^{-1})
      (I_{m} \otimes \cBr(\sigma_{1}, \mur)).
  \end{align*}
  We observe that with~\cref{eqn:project} it holds
  \begin{align*}
    & (I_{m} \otimes V) (I_{m} \otimes \cKr(\sigma_{1}, \mur)^{-1})
      (I_{m} \otimes \cBr(\sigma_{1}, \mur))\\
    & = (I_{m} \otimes V \cKr(\sigma_{1}, \mur)^{-1} \cBr(\sigma_{1}, \mur))\\
    & = (I_{m} \otimes V \cKr(\sigma_{1}, \mur)^{-1} W^{\herm}
      \cB(\sigma_{1}, \mur))\\
    & = (I_{m} \otimes \underbrace{V \cKr(\sigma_{1}, \mur)^{-1} W^{\herm}
      \cK(\sigma_{1}, \mur)}_{P_{\mathrm{V}_{1}}}
      \underbrace{\cK(\sigma_{1}, \mur)^{-1} \cB(\sigma_{1}, \mur)}_{V_{1}}),
  \end{align*}
  where $P_{\mathrm{V}_{1}}$ is a projector onto $\mspan(V)$ and $V_{1}$ is as 
  defined in \cref{eqn:V1Thm1}.
  By construction, we have $\mspan(V_{1}) \subseteq \mspan(V)$; thus 
  $P_{\mathrm{V}_{1}} V_{1} = V_{1}$ and, therefore
  \begin{align*}
    & (I_{m} \otimes V) (I_{m} \otimes \cKr(\sigma_{1}, \mur)^{-1})
      (I_{m} \otimes \cBr(\sigma_{1}, \mur))\\
    & = (I_{m} \otimes \cK(\sigma_{1}, \mur)^{-1})
      (I_{m} \otimes \cB(\sigma_{1}, \mur)).
  \end{align*}
  Then, $\Gr_{2}$ can be written as
  \begin{align*}
    \Gr_{2}(\sigma_{1}, \sigma_{2}, \mur) & = \cCr(\sigma_{2}, \mur)
      \cKr(\sigma_{2}, \mur)^{-1} W^{\herm} \cN(\sigma_{1}, \mur)\\
    & \quad{}\times{} (I_{m} \otimes \cK(\sigma_{1}, \mur)^{-1})
      (I_{m} \otimes \cB(\sigma_{1}, \mur))\\
    & = \cC(\sigma_{2}, \mur) V \cKr(\sigma_{2}, \mur)^{-1} W^{\herm} 
      \cN(\sigma_{1}, \mur)\\
    & \quad{}\times{} (I_{m} \otimes V_{1}).
  \end{align*}
  Also, it holds that
  \begin{align*}
    & V \cKr(\sigma_{2}, \mur)^{-1} W^{\herm} \cN(\sigma_{1}, \mur)
      (I_{m} \otimes V_{1})\\
    & = \underbrace{V \cKr(\sigma_{2}, \mur)^{-1} W^{\herm}
      \cK(\sigma_{2}, \mur)}_{P_{\mathrm{V}_{2}}} \\
    & \quad{}\times{} \underbrace{\cK(\sigma_{2}, \mur)^{-1}
      \cN(\sigma_{1}, \mur) (I_{m} \otimes V_{1})}_{V_{2}}\\
    & = \cK(\sigma_{2}, \mur)^{-1} \cN(\sigma_{1}, \mur) (I_{m} \otimes V_{1}),
  \end{align*}
  using the fact that $P_{\mathrm{V}_{2}}$ is another projector onto $\mspan(V)$ 
  and that $\mspan(V_{2}) \subseteq \mspan(V)$.
  Inserting this last equality into the second reduced-order transfer function
  yields
  \begin{align*}
    \Gr_{2}(\sigma_{1}, \sigma_{2}, \mur) = G_{2}(\sigma_{1}, \sigma_{2}, \mur).
  \end{align*}
  Constructing further projectors onto $\mspan(V)$ for higher-or\-der transfer
  functions gives the result in (a).
  The result in Part~(b) follows exactly the same way by using the Hermitian 
  transposed matrix functions and constructing now projectors onto $\mspan(W)$.
  Part~(c) is then resulting from the application of both types of projectors
  onto $\mspan(V)$ and $\mspan(W)$.
\end{proof}

In \Cref{thm:mtxvw}, only function values are matched, i.e., the zeroth
derivative.
The following theorem extends these results to matching higher-order
derivatives in the frequency arguments, i.e., to enforcing Hermite
interpolation conditions.

\begin{theorem}[Hermite matrix interpolation]%
  \label{thm:mtxvwhermite}
  Let $G$ be a parametric bilinear system, with its structured subsystem
  transfer functions $G_k$ in~\cref{eqn:tfmimo} and  $\Gr$ be the
  re\-du\-ced-order parametric bilinear system, constructed as
  in~\cref{eqn:project} with its  subsystem transfer functions $\Gr_k$
  in~\cref{eqn:tfmimored}.
  Let  the matrix functions 
  $\cC(s, \mu)$, $\cK(s, \mu)^{-1}$, $\cN(s, \mu)$, $\cB(s, \mu)$, and
  $\cKr(s, \mu)^{-1}$ be analytic for given sets of frequency interpolation
  points $\sigma_{1}, \ldots, \sigma_{k} \in \C$ and $\varsigma_{1}, \ldots,
  \varsigma_{\theta} \in \C$, and the parameter interpolation point
  $\mur \in \M$.
  \begin{enumerate}[label=(\alph*)]
    \item If $V$ is constructed such that
      \begin{align*}
        \mspan(V) \supseteq \mspan([V_{1,0}, \ldots, V_{k, \ell_{k}}]),
      \end{align*}
      where
      \begin{align*}
        \begin{aligned}
          V_{1, j_{1}} & = \partial_{s^{j_{1}}} (\cK^{-1} \cB)
            (\sigma_{1}, \mur)~\text{and}\\
          V_{q, j_{q}} & = \partial_{s^{j_{q}}} \cK^{-1} (\sigma_{q}, \mur)\\
          & \quad{}\times{} \left( \prod\limits_{j = 1}^{q-2}
            \partial_{s^{\ell_{q -j}}} \big( (I_{m^{j-1}} \otimes \cN) \right. \\
          & \left. \quad{}\times{} \vphantom{\prod\limits_{j = 1}^{q-2}}
            (I_{m^{j}} \otimes \cK) \big) (\sigma_{q-j}, \mur) \right)\\
          & \quad{}\times{} \partial_{s^{\ell_{1}}}
            \big( (I_{m^{q-2}} \otimes \cN)(I_{m^{q-1}} \otimes \cK) \\
          & \quad{}\times{}
            (I_{m^{q-1}} \otimes \cB) \big) (\sigma_{1}, \mur),
        \end{aligned}
      \end{align*}
      for $2 \leq q \leq k$ and $0 \leq j_{1} \leq \ell_{1}$;
      $0 \leq j_{q} \leq \ell_{q}$, then the
      following interpolation conditions hold true:
      \begin{align} \label{eqn:herma}
        \begin{aligned}
          & \partial_{s_{1}^{\ell_{1}} \cdots s_{q-1}^{\ell_{q-1}}
            s_{q}^{j_{q}}} G_{q} (\sigma_{1}, \ldots, \sigma_{q}, \mur) \\
          & = \partial_{s_{1}^{\ell_{1}} \cdots s_{q-1}^{\ell_{q-1}}
            s_{q}^{j_{q}}} \Gr_{q} (\sigma_{1}, \ldots, \sigma_{q}, \mur),
        \end{aligned}
      \end{align}
      for $q = 1, \ldots, k$ and $j_{q} = 0, \ldots, \ell_{q}$.
    \item If $W$ is constructed such that
      \begin{align*}
        \mspan(W) \supseteq \mspan([W_{1,0}, \ldots, 
          W_{\theta, \nu_{\theta}}]),
      \end{align*}
      where
      \begin{align*}
        \begin{aligned}
          W_{1, i_{\theta}} & = \partial_{s^{i_{\theta}}} (\cK^{-\herm}
            \cC^{\herm}) (\varsigma_{\theta}, \mur)~\text{and}\\
          W_{\eta, i_{\theta-\eta+1}} & = \partial_{s^{i_{\theta-\eta+1}}}
            (\cK^{-\herm} \tcN^{\herm})(\varsigma_{\theta-\eta+1}, \mur)\\
          & \quad{}\times{} \left( \prod\limits_{i = \theta-\eta+2}^{\theta - 1}
            \partial_{s^{\nu_{i}}} (I_{m^{i-1}} \otimes \cK^{-\herm} \right. \\
          & \left. \vphantom{\prod\limits_{i = \theta-\eta+2}^{\theta - 1}}
            \quad{}\times{} \tcN^{\herm})(\varsigma_{i}, \mur) \right)\\
          & \quad{}\times{} \left( I_{m^{\theta-1}} \otimes
            \partial_{s^{\nu_{\theta}}} (\cK^{-\herm} \cC^{\herm})
            (\varsigma_{\theta}, \mur) \right),
        \end{aligned}
      \end{align*}
      for $2 \leq \eta \leq \theta$ and $0 \leq i_{\theta} \leq \nu_{\theta}$;
      $0 \leq i_{\theta - \eta + 1} \leq \nu_{\theta - \eta + 1}$,
      then the following interpolation conditions hold true:
      \begin{align} \label{eqn:hermb}
        \begin{aligned}
          & \partial_{s_{1}^{i_{\theta - \eta + 1}} s_{2}^{\nu_{2}} \cdots
            s_{\theta}^{\nu_{\theta}}} G_{\eta}
            (\varsigma_{\theta - \eta + 1}, \ldots, \varsigma_{\theta}, \mur)\\
          & = \partial_{s_{1}^{i_{\theta - \eta + 1}} s_{2}^{\nu_{2}} \cdots
            s_{\theta}^{\nu_{\theta}}} \Gr_{\eta}
            (\varsigma_{\theta - \eta + 1}, \ldots, \varsigma_{\theta}, \mur),
        \end{aligned}
      \end{align}
      for $\eta = 1, \ldots, \theta$ and $i_{\eta} = 0, \ldots, \nu_{\eta}$.
    \item Let $V$ be constructed as in (a) and $W$ as in (b). Then,
      in addition to~\cref{eqn:herma} and~\cref{eqn:hermb}, the interpolation
      conditions~\cref{eqn:derivates} hold for $j_{q} = 0, \ldots, \ell_{q}$;
      $i_{\theta - \eta + 1} = 0,$ $\ldots,$ $\nu_{\theta - \eta + 1}$;
      $1 \leq q \leq k$ and $1 \leq \eta \leq \theta$.
  \end{enumerate}
\end{theorem}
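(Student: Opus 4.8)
The plan is to reuse the structure of the proof of \Cref{thm:mtxvw}. First I would fix the parameter at $\mu = \mur$ and observe that all interpolation conditions \cref{eqn:herma}, \cref{eqn:hermb}, and \cref{eqn:derivates} differentiate only with respect to the frequency arguments and never with respect to $\mu$. Consequently, for this fixed $\mur$ the matrix functions $\cC(\cdot,\mur)$, $\cK(\cdot,\mur)$, $\cN(\cdot,\mur)$, $\cB(\cdot,\mur)$ realize an ordinary \emph{non-parametric} structured bilinear system, its subsystem transfer functions are $G_q(\cdot,\ldots,\cdot,\mur)$ and $\Gr_q(\cdot,\ldots,\cdot,\mur)$, and \cref{eqn:herma}, \cref{eqn:hermb}, \cref{eqn:derivates} are exactly its structured \emph{Hermite} subsystem interpolation conditions. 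Since the vectors $V_{q,j_q}$ and $W_{\eta,i}$ are built solely from $\cC(\cdot,\mur)$, $\cK(\cdot,\mur)^{-1}$, $\cN(\cdot,\mur)$, $\cB(\cdot,\mur)$ evaluated at the frequency interpolation points, the subspace conditions of the corresponding non-parametric structured Hermite interpolation result of \cite{morBenGW20} carry over verbatim, and those are precisely Parts~(a)--(c).

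For completeness, and to set up the argument of \Cref{sec:paramsens}, I would also present the direct proof by the projector calculus used for \Cref{thm:mtxvw}. The central object is the $s$-dependent oblique projector $P_{\mathrm{V}}(s) = V\cKr(s,\mur)^{-1}W^{\herm}\cK(s,\mur)$ onto $\mspan(V)$ (well defined since $\cKr(s,\mur)^{-1}$ exists at the interpolation points by assumption): because $\cKr(s,\mur) = W^{\herm}\cK(s,\mur)V$, it satisfies $P_{\mathrm{V}}(s)v = v$ for \emph{every} $s$ whenever $v\in\mspan(V)$ is a constant vector, so that $\partial_{s^{i}}\!\big(P_{\mathrm{V}}(s)v\big) = 0$ for all $i\geq1$; this is the one fact beyond the computation in the proof of \Cref{thm:mtxvw}. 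I would then induct on the subsystem index $q$, peeling the nested resolvent chain in \cref{eqn:tfmimored} one layer at a time exactly as in the $k=2$ calculation there. Once the inductive hypothesis has replaced the reduced inner chain by the full one up to the prescribed derivative orders at the interpolation points, each remaining reduced layer $V\cKr(s_q,\mur)^{-1}W^{\herm}(\cdots)$ equals $P_{\mathrm{V}}(s_q)$ applied to the corresponding full layer $\cK(s_q,\mur)^{-1}(\cdots)$; applying the multivariate Leibniz rule in $s_q$, the $i=0$ term is $\big(I-P_{\mathrm{V}}(\sigma_q)\big)$ acting on the full frequency derivative of the running inner function, which is exactly $V_{q,j_q}\in\mspan(V)$ and hence vanishes, while every $i\geq1$ term carries a factor $\partial_{s_q^{i}}P_{\mathrm{V}}(\sigma_q)$ acting on a constant lower-order derivative equal to $V_{q,j_q-i}\in\mspan(V)$ and hence also vanishes. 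Since the $\cC$-layer at the front of $G_q$ and $\Gr_q$ is identical, one further Leibniz step transfers the match to the transfer functions, yielding \cref{eqn:herma}. Part~(b) is the adjoint mirror image (replace $V$, $P_{\mathrm{V}}$, $\cB$, $\cN$ by $W$, the dual projector onto $\mspan(W)$ formed from the Hermitian-transposed quantities, $\cC^{\herm}$, $\tcN^{\herm}$, and reverse the ordering of the frequency points), and Part~(c) follows by simultaneously inserting the $P_{\mathrm{V}}$ projectors at the leading $\sigma$-layers and the dual projectors at the trailing $\varsigma$-layers, exactly as in the proof of \Cref{thm:mtxvw}.

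I expect the main obstacle to be the combinatorial bookkeeping hidden in that Leibniz expansion: one has to check that \emph{every} term obtained by differentiating the nested product of resolvents $\cK(s_q,\mur)^{-1}$ and Kronecker-lifted factors $\big(I_{m^{\cdot}}\otimes\cN(s_{\cdot},\mur)\big)$, $\big(I_{m^{\cdot}}\otimes\cB(s_1,\mur)\big)$ is either the top-order vector $V_{q,j_q}$ itself or collapses --- through a lower subsystem index or a lower frequency-derivative order --- to a vector already forced into $\mspan(V)$ (and the analogous statement for $\mspan(W)$). The definitions of $V_{q,j_q}$ and $W_{\eta,i}$ are precisely engineered so that this family of directions is closed under the partial derivatives that occur; confirming this closure is the genuinely technical step, and it is exactly the content of the non-parametric structured bilinear result of \cite{morBenGW20}, which makes the reduction in the first paragraph the clean route.
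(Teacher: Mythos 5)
Your proposal is correct and its first paragraph is essentially the paper's entire proof: the paper likewise observes that all conditions in \cref{eqn:herma}, \cref{eqn:hermb}, and \cref{eqn:derivates} involve no differentiation with respect to $\mu$, fixes $\mu = \mur$, and invokes the non-parametric structured Hermite interpolation result of \cite{morBenGW20} (Theorem~9 there), whose subspace conditions coincide with Parts~(a)--(c). Your additional projector-calculus sketch with the $s$-dependent oblique projector and the Leibniz expansion is sound and fills in the detail the paper delegates to that citation, but it is not a different route.
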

\begin{proof} \allowdisplaybreaks
  As in \Cref{thm:mtxvw}, all the interpolation conditions are for a fixed
  parameter $\mur \in \M$, i.e., they can be proven using a similar construction 
  of projectors onto suitable subspaces as in \Cref{thm:mtxvw}.
  Therefore, the subspace conditions in~\cite[Theorem~9]{morBenGW20} can be
  applied here, which are precisely the subspace conditions listed in
  \Cref{thm:mtxvwhermite}.
\end{proof}

\begin{figure*}
  \begin{align} \label{eqn:derivates}
    \begin{aligned}
      & \partial_{s_{1}^{\ell_{1}} \cdots s_{q-1}^{\ell_{q-1}}
        s_{q}^{j_{q}} s_{q+1}^{i_{\theta - \eta + 1}}
        s_{q+2}^{\nu_{\theta - \eta + 2}} \cdots s_{q + \eta}^{\nu_{\theta}}}
        G_{q + \eta} (\sigma_{1}, \ldots, \sigma_{q},
        \varsigma_{\theta - \eta + 1}, \ldots, \varsigma_{\theta}, \mur)\\
      & = \partial_{s_{1}^{\ell_{1}} \cdots s_{q-1}^{\ell_{q-1}}
        s_{q}^{j_{q}} s_{q+1}^{i_{\theta - \eta + 1}}
        s_{q+2}^{\nu_{\theta - \eta + 2}} \cdots s_{q + \eta}^{\nu_{\theta}}}
        \Gr_{q + \eta} (\sigma_{1}, \ldots, \sigma_{q},
        \varsigma_{\theta - \eta + 1}, \ldots, \varsigma_{\theta}, \mur),
    \end{aligned}
  \end{align}
  \hrule
\end{figure*}


\section{Matching parameter sensitivities}%
\label{sec:paramsens}

So far, the interpolation conditions enforced did not show variability with
respect to the parameter $\mu$.
Even in the Hermite conditions matched in \Cref{thm:mtxvwhermite}, the matched
derivatives (sensitivities) are with respect to the frequency points.
This enabled us to directly employ the conditions and analysis
from~\cite{morBenGW20}.
However, for parametric systems it is important to match the parameter
sensitivity  with respect to the parameter variation as well.
This is what we establish in the next result, extending 
the similar results from linear dynamics~\cite{morBauBBetal11} and
unstructured bilinear dynamics~\cite{morRodGB18} to the new parametric structured
framework.
An important conclusion is that the parameter sensitivity is matched
implicitly, i.e., without ever explicitly computing it.
This is achieved by using the same set of frequency interpolation points for
$V$ and $W$. 

\begin{theorem}[Two-sided matrix interpolation with identical point sets]%
  \label{thm:mtxvwtwo}
  Let $G$ be a parametric bilinear system, with its structured subsystem
  transfer functions $G_k$ in~\cref{eqn:tfmimo} and $\Gr$ be the
  re\-du\-ced-order parametric bilinear system, constructed as
  in~\cref{eqn:project} with its  subsystem transfer functions $\Gr_k$
  in~\cref{eqn:tfmimored}.
  Let  the matrix functions $\cC(s, \mu)$, $\cK(s, \mu)^{-1}$, $\cN(s, \mu)$,
  $\cB(s, \mu)$, and $\cKr(s, \mu)^{-1}$ be analytic for a given set of
  frequency interpolation points $\sigma_{1}, \ldots, \sigma_{k} \in \C$ and
  the parameter interpolation point $\mur \in \M$.
  \begin{enumerate}[label=(\alph*)]
    \item  Let $V$  be constructed as in \Cref{thm:mtxvw} Part (a)
      and $W$ be constructed as in \Cref{thm:mtxvw} Part (b) with
      $\varsigma_i = \sigma_i$ for $i=1,2,\ldots,k.$ 
      Then, in addition to \cref{eqn:conda,eqn:condb,eqn:condc} it holds
      \begin{align} \label{eqn:imphermparta}
         \nabla G_{k}(\sigma_{1}, \ldots, \sigma_{k}, \mur) 
         & = \nabla \Gr_{k}(\sigma_{1}, \ldots, \sigma_{k}, \mur).
      \end{align}
    \item Let $V$  be constructed as in \Cref{thm:mtxvwhermite} Part (a)
    and $W$ be constructed as in \Cref{thm:mtxvwhermite} Part (b) with
    $\varsigma_i = \sigma_i$ for $i = 1, 2,\ldots, k.$ 
    Then, in addition to \cref{eqn:herma}--\cref{eqn:derivates}, it holds
    \begin{align} \label{eqn:imphermpartb}
        \begin{aligned}
          & \nabla \left( \partial_{s_{1}^{\ell_{1}} \cdots s_{k}^{\ell_{k}}}
            G_{k} (\sigma_{1}, \ldots, \sigma_{k}, \mur) \right)\\
          & = \nabla \left( \partial_{s_{1}^{\ell_{1}} \cdots s_{k}^{\ell_{k}}}
            \Gr_{k} (\sigma_{1}, \ldots, \sigma_{k}, \mur) \right).
        \end{aligned}
      \end{align}
  \end{enumerate}
\end{theorem}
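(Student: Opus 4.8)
The plan is to treat the value conditions as already settled and concentrate on the gradient. Indeed, \cref{eqn:conda,eqn:condb,eqn:condc} involve no parameter derivative at all, so applying \Cref{thm:mtxvw} with $\theta = k$ and $\varsigma_i = \sigma_i$ yields them verbatim; what remains for Part~(a) is \cref{eqn:imphermparta}, i.e., the simultaneous matching of $\partial_{s_1} G_k, \ldots, \partial_{s_k} G_k$ and $\partial_{\mu_1} G_k, \ldots, \partial_{\mu_d} G_k$ at $(\sigma_1, \ldots, \sigma_k, \mur)$. Because $\Gr_k$ in \cref{eqn:tfmimored} is again a structured bilinear transfer function whose matrix functions are the projected ones in \cref{eqn:project}, and because $V, W$ are constant (equivalently, because projection leaves the scalar functions of the affine decomposition untouched), one has $\partial_x \cCr = (\partial_x \cC) V$, $\partial_x \cKr = W^\herm (\partial_x \cK) V$, $\partial_x \cNr_j = W^\herm (\partial_x \cN_j) V$, $\partial_x \cBr = W^\herm (\partial_x \cB)$, and $\partial_x \cKr^{-1} = -\cKr^{-1} (\partial_x \cKr) \cKr^{-1}$, for any argument $x \in \{s_1, \ldots, s_k, \mu_1, \ldots, \mu_d\}$. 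Differentiating \cref{eqn:tfmimored} by the Leibniz rule therefore expresses $\partial_x \Gr_k(\sigma_1, \ldots, \sigma_k, \mur)$ as a sum over the factors $\cCr, \cKr^{-1}, \cNr, \ldots, \cBr$ of \cref{eqn:tfmimored}, where each summand is the product of the \emph{undifferentiated} reduced factors (evaluated at the $\sigma_i$ and $\mur$) with a single \emph{differentiated but still projected} factor inserted in the chain.

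The substance of the argument is to collapse, in each such summand, the reduced chain on either side of the inserted differentiated factor back to the corresponding full-order chain, by the same projector ``peeling'' used in the proof of \Cref{thm:mtxvw}. Split \cref{eqn:tfmimored} at the differentiated factor into a left partial product $L$ and a right partial product $R$. On the right, $R$ together with the trailing $\cKr(\sigma_i, \mur)^{-1}$ and the Kronecker copies of $V$ carried by $\cNr$ and $\cBr$ is exactly the structure building the chain $V_1, V_2, \ldots$ of \cref{eqn:V1Thm1}; repeatedly using that $V \cKr(\sigma_j, \mur)^{-1} W^\herm \cK(\sigma_j, \mur)$ is a projector onto $\mspan(V)$ and that $\mspan(V_j) \subseteq \mspan(V)$, exactly as in \Cref{thm:mtxvw} Part~(a), collapses the reduced right tail to the full-order right tail $\cK(\sigma_i, \mur)^{-1} \cN(\sigma_{i-1}, \mur) \cdots \cB(\sigma_1, \mur)$ and absorbs the $V$ accompanying the differentiated factor. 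Dually, on the left we pass to Hermitian transposes: the transpose of $L$ together with the trailing $\cKr(\sigma_i, \mur)^{-1}$ is the structure building the chain $W_1, W_2, \ldots$ of \Cref{thm:mtxvw} Part~(b), and since $W \cKr(\sigma_j, \mur)^{-\herm} V^\herm \cK(\sigma_j, \mur)^\herm$ is a projector onto $\mspan(W)$ with $\mspan(W_i) \subseteq \mspan(W)$, the reduced left tail collapses to $\cC(\sigma_k, \mur) \cK(\sigma_k, \mur)^{-1} \cdots \cN(\sigma_i, \mur)$ and absorbs the accompanying $W^\herm$. This is exactly the step that needs $\varsigma_i = \sigma_i$ for $i = 1, \ldots, k$: it makes the frequency points carried by the $W$-chain agree, in order, with $\sigma_k, \sigma_{k-1}, \ldots$ as they occur in the left tail of $G_k$, so that the $W$-peeling reproduces the full-order left tail at the correct frequencies. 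After both collapses, the summand coincides with the matching summand of $\partial_x G_k(\sigma_1, \ldots, \sigma_k, \mur)$; summing over all factors gives $\partial_x \Gr_k = \partial_x G_k$ at the point, first for $x = \mu_\ell$ (which touches every factor) and, by the same reasoning restricted to the factors carrying the argument $s_i$, also for $x = s_i$. This establishes \cref{eqn:imphermparta}.

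For Part~(b), the only change is that the chains now carry frequency derivatives: $V$ is taken from \Cref{thm:mtxvwhermite} Part~(a) and $W$ from Part~(b) with matching frequency-derivative orders, so the peeling lemmas apply verbatim to frequency-differentiated partial products, which is precisely what \Cref{thm:mtxvwhermite} supplies. Applying the Leibniz rule to $\nabla\big( \partial_{s_{1}^{\ell_{1}} \cdots s_{k}^{\ell_{k}}} \Gr_k \big)$ distributes the $\ell_q$ frequency derivatives and the single extra derivative $\partial_x$ over the factors; every resulting summand still has exactly one factor carrying the extra derivative, surrounded by factors carrying only frequency derivatives of orders already covered by the two chains, so the two-sided collapse identifies it with the matching summand of $\nabla\big( \partial_{s_{1}^{\ell_{1}} \cdots s_{k}^{\ell_{k}}} G_k \big)$, and summation yields \cref{eqn:imphermpartb}. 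I expect the only real difficulty to be organizational rather than conceptual: keeping the Kronecker factors $I_{m^{j}}$ correctly attached to each block when \cref{eqn:tfmimored} is split at a differentiated factor, and, in Part~(b), controlling the combinatorial number of Leibniz terms; neither affects the two-sided peeling mechanism, which is the sole substantive ingredient and which was already established for the non-parametric case in \cite{morBenGW20}.
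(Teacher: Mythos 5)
Your proposal is correct and follows essentially the same route as the paper: differentiate the reduced transfer function by the Leibniz rule so that each summand carries exactly one differentiated (but still projected) factor, then collapse the reduced right tail onto the full-order one via projectors onto $\mspan(V)$ and the reduced left tail via projectors onto $\mspan(W)$, which is exactly where the identical point sets $\varsigma_i=\sigma_i$ enter. The only cosmetic difference is that the paper delegates the frequency-derivative part of the gradient to the non-parametric result in \cite{morBenGW20}, whereas you rederive it inline with the same peeling mechanism.
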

\begin{proof} \allowdisplaybreaks
  For  brevity, we only prove~\cref{eqn:imphermparta}.  The proof 
  of~\cref{eqn:imphermpartb} follows analogously.
  As in the proof of \Cref{thm:mtxvw}, we will construct
  appropriate projectors onto the projection spaces $\mspan(V)$ or $\mspan(W)$.
  In contrast to \Cref{thm:mtxvwhermite}, we  now also interpolate
  the derivative with respect to the parameters.
  Using the product rule, the partial derivative of $\Gr_{k}$ 
  with respect to
  a single parameter entry $\mu_{i}$, for $1 \leq i \leq d$, is given by
  \begin{align} \label{eqn:partdevmu}
    \begin{aligned}
      & \partial_{\mu_{i}} \Gr_{k}(\sigma_{1}, \ldots, \sigma_{k}, \mur)\\
      & = \sum\limits_{\alpha \in \mathbb{A}}
        \left( \partial_{\mu_{i}^{\alpha_{1}}} \cCr (\sigma_{k}, \mur) \right)
        \left( \partial_{\mu_{i}^{\alpha_{2}}} \cKr^{-1}(\sigma_{k}, \mur) 
        \right) \\
      & \quad{}\times{} \left(\prod\limits_{j = 1}^{k-1} (I_{m^{j-1}} \otimes
        \partial_{\mu_{i}^{\alpha_{2j+1}}} \cNr(\sigma_{k-j}, \mur)) \right.\\
      & \left.\vphantom{\prod\limits_{j = 1}^{k-1}} \quad{}\times{}
        (I_{m^{j}} \otimes \partial_{\mu_{i}^{\alpha_{2j+2}}}
        \cKr^{-1}(\sigma_{k-j}, \mur))\right) \\
      & \quad{}\times{} (I_{m^{k-1}} \otimes \partial_{\mu_{i}^{\alpha_{2k+1}}}
        \cBr(\sigma_{1}, \mur)),
    \end{aligned}
  \end{align}
  where $\mathbb{A}$ denotes the set of all columns
  of the identity matrix of size $2k+1$. In other words, \cref{eqn:partdevmu} is 
  a sum of  $2k+1$ terms where each term corresponds to the vector $\alpha$ 
  taking a value from this set of columns. Therefore, in each  term only a
  single matrix function is differentiated. 
  We will show that every single term in the sum~\cref{eqn:partdevmu} matches 
  the same term in the full order model, thus, summed together,  proving the 
  desired interpolation property~\cref{eqn:imphermparta}. 
  Consider, e.g., the second term in~\cref{eqn:partdevmu}, i.e., the term in 
  which $\alpha$ is the second column of the identity matrix: $\alpha = 
  \begin{bmatrix} \alpha_1 & \alpha_2 &  \alpha_3 &\cdots & 
  \alpha_{2k+1}\end{bmatrix}^{\trans} = \begin{bmatrix} 0 & 1 &  0 & \ldots & 0 
  \end{bmatrix}^{\trans}$.
  Denote the corresponding term by $\cHr_{2}$. Then,
  \begin{align*}
    \cHr_{2} & := \cCr (\sigma_{k}, \mur) \left( \partial_{\mu_{i}}
      \cKr^{-1} (\sigma_{k}, \mur)\right) \\
    & \quad{}\times{}
      \left(\prod\limits_{j = 1}^{k-1}
      (I_{m^{j-1}} \otimes \cNr(\sigma_{k-j}, \mur)) \right. \\
    & \left. \vphantom{\prod\limits_{j = 1}^{k-1}} \quad{}\times{}
      (I_{m^{j}} \otimes \cKr(\sigma_{k-j}, \mur)^{-1}) \right)\\
    & \quad{}\times{}
      (I_{m^{k-1}} \otimes \cBr(\sigma_{1}, \mur)).
  \end{align*}
  The derivative of the inverse appearing in $\cHr_{2}$ is given by
  \begin{align*}
    \partial_{\mu_{i}} \cKr^{-1}(\sigma_{k}, \mur) & = 
      -\cKr(\sigma_{k}, \mur)^{-1}
      \Big(\partial_{\mu_{i}} \cKr(\sigma_{k}, \mur)\Big)
      \cKr(\sigma_{k}, \mur)^{-1}.
  \end{align*}
  Therefore, $\cHr_{2}$ can be rewritten as
  \begin{align*}
    \cHr_{2} & = -\cCr(\sigma_{k}, \mur) \cKr(\sigma_{k}, \mur)^{-1}
      \Big(\partial_{\mu_{i}} \cKr(\sigma_{k}, \mur)\Big)
      \cKr(\sigma_{k}, \mur)^{-1}\\
    & \quad{}\times{} \left(\prod\limits_{j = 1}^{k-1}
      (I_{m^{j-1}} \otimes \cNr(\sigma_{k-j}, \mur)) \right. \\
    & \left. \vphantom{\prod\limits_{j = 1}^{k-1}} \quad{}\times{}
      (I_{m^{j}} \otimes \cKr(\sigma_{k-j}, \mur)^{-1}) \right) \\
    & \quad{}\times{} (I_{m^{k-1}} \otimes \cBr(\sigma_{1}, \mur)) \\
    & =: -\widehat{W}_{1}^{\herm}
      \Big(\partial_{\mu_{i}} \cKr(\sigma_{k}, \mur)\Big)
      \widehat{V}_{k}.
  \end{align*}
  Noting that the model reduction space $V$ were constructed as in  
  \Cref{thm:mtxvw},  we obtain
  \begin{align*}
    V \widehat{V}_{k} & = V \cKr(\sigma_{k}, \mur)^{-1}
      \left(\prod\limits_{j = 1}^{k-1}
      (I_{m^{j-1}} \otimes \cNr(\sigma_{k-j}, \mur)) \right. \\
    & \left. \vphantom{\prod\limits_{j = 1}^{k-1}} \quad{}\times{}
      (I_{m^{j}} \otimes \cKr(\sigma_{k-j}, \mur)^{-1}) \right) \\
    & \quad{}\times{} (I_{m^{k-1}} \otimes \cBr(\sigma_{1}, \mur)) \\
    & = \underbrace{V \cKr(\sigma_{k}, \mur)^{-1} W^{\herm}
      \cK(\sigma_{k}, \mur)}_{P_{\mathrm{V}_{k}}}\\
    & \quad{}\times{} \cK(\sigma_{k}, \mur)^{-1}
      \left(\prod\limits_{j = 1}^{k-1}
      (I_{m^{j-1}} \otimes \cN(\sigma_{k-j}, \mur)) \right. \\
    & \left. \vphantom{\prod\limits_{j = 1}^{k-1}} \quad{}\times{}
      (I_{m^{j}} \otimes \cK(\sigma_{k-j}, \mur)^{-1}) \right) \\
    & \quad{}\times{} (I_{m^{k-1}} \otimes \cB(\sigma_{1}, \mur)) \\
    & = P_{\mathrm{V}_{k}} V_{k}\\
    & = V_{k},
  \end{align*}
  where $P_{\mathrm{V}_{k}}$ is a projector onto $\mspan(V)$.
  Similarly, we have
  \begin{align*}
    W \widehat{W}_{1} & = W \cKr(\sigma_{k}, \mur)^{-\herm}
      \cCr(\sigma_{k}, \mur)^{\herm}\\
    & = \underbrace{W \cKr(\sigma_{k}, \mur)^{-\herm} V
      \cK(\sigma_{k}, \mur)^{\herm}}_{P_{\mathrm{W}_{1}}}
      \underbrace{\cK(\sigma_{k}, \mur)^{-\herm}
      \cC(\sigma_{k}, \mur)^{\herm}}_{W_{1}}\\
    & = W_{1},
  \end{align*}
  with $P_{\mathrm{W}_{1}}$ a projector onto $\mspan(W)$.
  Using those two identities, we obtain
  \begin{align*}
    \cHr_{2} & = -\widehat{W}_{1}^{\herm}
      \Big(\partial_{\mu_{i}} \cKr(\sigma_{k}, \mur)\Big)
      \widehat{V}_{k}\\
    & = -\widehat{W}_{1}^{\herm} W^{\herm} \Big(\partial_{\mu_{i}} 
      \cK(\sigma_{k}, \mur)\Big) V \widehat{V}_{k}\\
    & = -W_{1}^{\herm} \Big(\partial_{\mu_{i}} 
      \cK(\sigma_{k}, \mur)\Big) V_{k}\\
    & = \cC (\sigma_{k}, \mur) \left( \partial_{\mu_{i}}
      \cK^{-1} (\sigma_{k}, \mur)\right) \\
    & \quad{}\times{}
      \left(\prod\limits_{j = 1}^{k-1}
      (I_{m^{j-1}} \otimes \cN(\sigma_{k-j}, \mur)) \right. \\
    & \left. \vphantom{\prod\limits_{j = 1}^{k-1}} \quad{}\times{}
      (I_{m^{j}} \otimes \cK(\sigma_{k-j}, \mur)^{-1}) \right)\\
    & \quad{}\times{}
      (I_{m^{k-1}} \otimes \cB(\sigma_{1}, \mur)),
  \end{align*}
  i.e., $\cHr_{2}$ is identical to the term using the original
  matrix functions.
  Since the same technique can be used for all other $\alpha$ values 
  corresponding the other columns in the set $\mathbb{A}$, we obtain,  for all 
  $1 \leq i \leq d$,
  \begin{align} \label{parampartial}
    \partial_{\mu_{i}} \Gr_{k}(\sigma_{1}, \ldots, \sigma_{k}, \mur)
      & = \partial_{\mu_{i}} G_{k}(\sigma_{1}, \ldots, \sigma_{k}, \mur).
  \end{align}
  Interpolation of the partial derivatives with respect to the frequency 
  parameters follows by using the fixed parameter $\mur$ 
  in~\cite[Corollary~2]{morBenGW20}. Together with~\cref{parampartial}, this 
  proves~\cref{eqn:imphermparta}.
\end{proof}

\begin{figure*}
  \begin{align} \label{eqn:sisotf}
    G_{k}(s_{1}, \ldots, s_{k}, \mu) & = \cC(s_{k}, \mu)\cK(s_{k}, \mu)^{-1}
      \left(\prod\limits_{j = 1}^{k-1} \cN(s_{k-j}, \mu)\cK(s_{k-j}, \mu)^{-1})
      \right) \cB(s_{1}, \mu)
  \end{align}
  \hrule
\end{figure*}

\begin{remark}
  \Cref{thm:mtxvwtwo} shows how to match the parameter sensitivity implicitly 
  without ever computing this quantity.
  Matching  the parameter sensitivities is important, especially in the setting 
  of optimization and design.
  These results can be extended to match the parameter Hessian as well; 
  compare to~\cite{morRodGB18}.
  However, we skip those details for brevity.
\end{remark}

\begin{remark}
  All the results in \Cref{thm:mtxvw,thm:mtxvwhermite,thm:mtxvwtwo} are 
  formulated for a single parameter interpolation point $\mur \in \M$.
  However, the results directly extend to interpolation at multiple parameter
  sampling points $\mur^{(1)}, \ldots, \mur^{(q)} \in \M$  by constructing the
  projection spaces for every parameter sample  and then concatenating the
  resulting spaces into a single global projection space.
  As example, consider the task of interpolating
  \begin{align} \label{eqn:exampleparam}
    \begin{aligned}
      G_{1}(\sigma_{1}, \mur^{(1)}), &&
        G_{2}(\sigma_{1}, \sigma_{2}, \mur^{(1)}),\\
      G_{1}(\sigma_{3}, \mur^{(2)}), &&
        G_{2}(\sigma_{3}, , \sigma_{4}, \mur^{(2)}),
    \end{aligned}
  \end{align}
  with the four frequency points $\sigma_{1}, \sigma_{2}, \sigma_{3}, 
  \sigma_{4}$ and the two parameter points $\mur^{(1)}, \mur^{(2)}$.
  Using \Cref{thm:mtxvw} Part~(a), we can construct basis matrices $V^{(1)}$, 
  $V^{(2)}$ for the interpolation in either $\mur^{(1)}$ or $\mur^{(2)}$, 
  respectively.
  The construction of a reduced-order model that satisfies all interpolation 
  conditions~\cref{eqn:exampleparam} is then given by constructing $V$ such that
  \begin{align*}
    \mspan(V) \supseteq \mspan([V^{(1)},~V^{(2)}]).
  \end{align*}
\end{remark}

\begin{remark}
  The results simplify drastically for single-input single-output (SISO) systems.
  In that case, the multivariate transfer functions corresponding to bilinear 
  systems~\cref{eqn:tfmimo} can be written without Kronecker
  products~\cref{eqn:sisotf} and the construction of the corresponding projection
  spaces simplifies such that no Kronecker products are involved anymore.
\end{remark}


\section{Numerical examples}%
\label{sec:examples}

We illustrate the analysis with two benchmark examples.
The experiments reported here have been executed on a machine with 2 Intel(R)
Xeon(R) Silver 4110 CPU processors running at 2.10GHz and equipped with
192 GB total main memory.
The computer is run on CentOS Linux release 7.5.1804 (Core) with
MATLAB 9.7.0.1190202 (R2019b).


\subsection{Parametric bilinear time-delay system}%
\label{subsec:delayexample}

In the first example from~\cite{morGosPBetal19}, we consider a time-delayed
heated rod modeled by a one-dimensional heat equation
\begingroup
\small
\begin{align*}
  \partial_{t} v(\zeta, t)  = \partial_{\zeta}^2 v(\zeta, t)
    + a_{1}(\zeta) v(\zeta, t)
    + a_{2}(\zeta) v(\zeta, t - 1) + u(t),
\end{align*}
\endgroup
with homogeneous Dirichlet boundary conditions. We pa\-ram\-e\-ter\-ize 
the diffusivity using the coefficients
\begin{align*}
  a_{1} = -\mu \sin(\zeta)~\text{and}~a_{2} = \mu \sin(\zeta),
    ~~\text{for}~~\mu \in [1, 10].
\end{align*}
The non-parametric example in~\cite{morGosPBetal19} is recovered for $\mu = 2$.
After a  spatial discretization, we obtain a parametric bilinear system of the 
form
\begingroup
\small
\begin{align*}
  \begin{aligned}
      \dot{x}(t) & = (A_{0} - \mu A_{d}) x(t) + \mu A_{d} x(t - 1)
        + N x(t) u(t) + B u(t), \\
      y(t) & =  C x(t),
  \end{aligned}
\end{align*}
\endgroup
with $m = p = 1$ and $n = 5\,000$.
In our structured parametric setting, this model corresponds to the matrix 
functions
\begin{align*}
  \begin{aligned}
    \cK(s, \mu) & = sI_{n} - (A_{0} - \mu A_{d}) - \mu e^{-s} A_{d},\\
    \cB(s, \mu) & = B,~\cN(s, \mu) = N,~\text{and}~\cC(s, \mu) = C.\\
  \end{aligned}
\end{align*}
The reduced-order model is constructed via \Cref{thm:mtxvwtwo} 
Part (a) with  the frequency  sampling points $\{\pm 10^{-4}\mathrm{i},$
$\pm 10^{4}\mathrm{i}\}$ and the parameter sampling points $\{1, 5.5, 10\}$ for
the first two transfer functions.
By construction, the reduced-order model has the same parametric time-delay 
structure as the original model and the state-space dimension $r = 24$.

\begin{figure*}
\vspace{.5\baselineskip}
\begin{center}
    \begin{subfigure}[t]{.45\textwidth}
      \begin{center}
        \begin{tikzpicture}
  \begin{axis}[
    view   = {0}{90},
    width  = .7\textwidth,
    height = .35\textwidth,
    scale only axis,
    axis on top,
    xmin   = 0,
    xmax   = 10,
    ymin   = 1,
    ymax   = 10,
    x tick label style = {yshift = -.175em},
    ytick  = {1, 3, 5, 7, 9},
    xlabel = {\small Time $t$\vphantom{Pp}},
    ylabel = {\small Parameter $\mu$}]
        
      \addplot graphics[xmin = 0, xmax = 10, ymin = 1, ymax = 10]
        {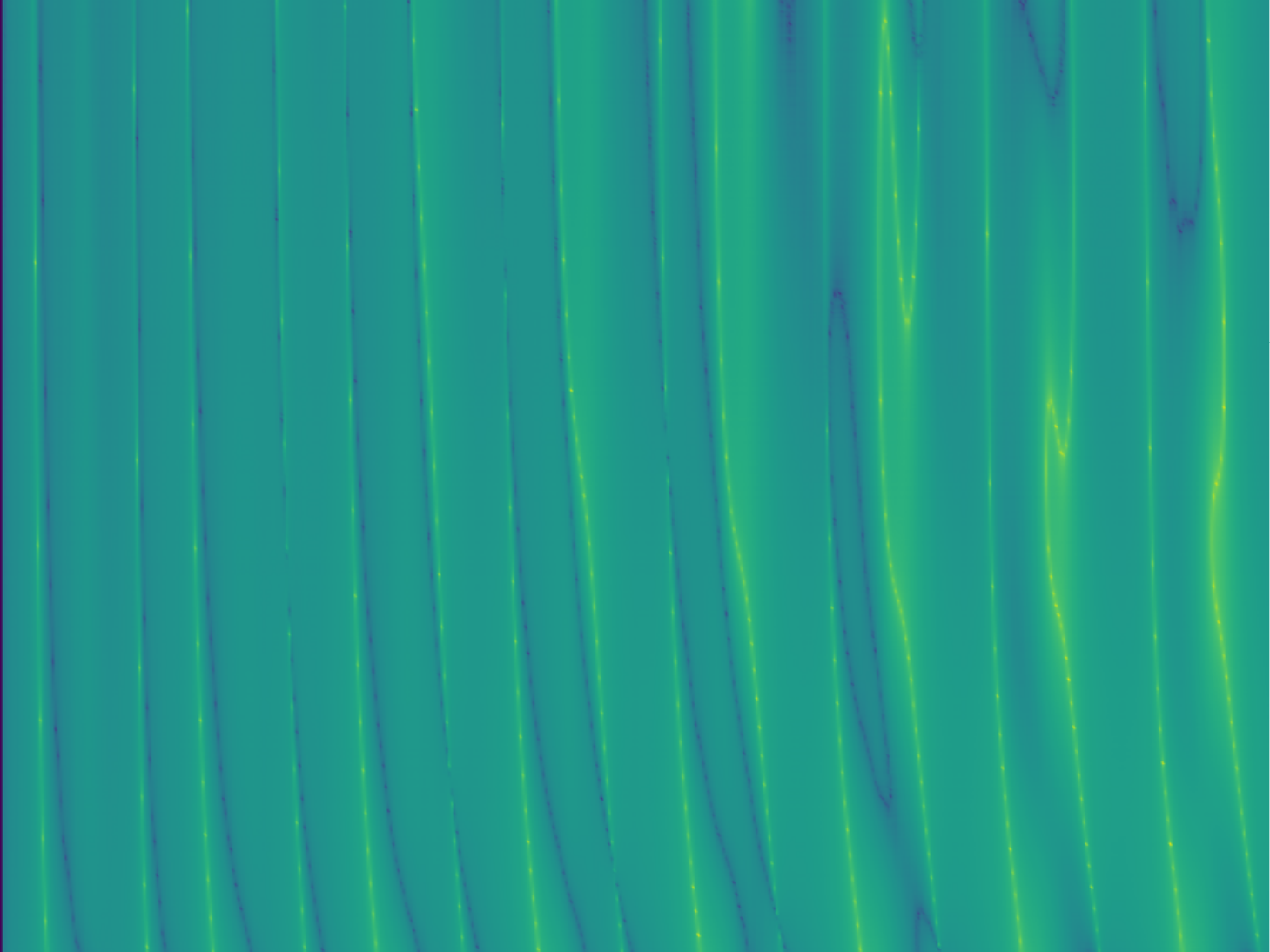};
            
  \end{axis}
\end{tikzpicture}
        \subcaption{Relative time domain error.}
        \label{fig:time_delay_sim}
      \end{center}
    \end{subfigure}
    \hfill%
    \begin{minipage}[t]{.092\textwidth}
      \vspace{-9.85\baselineskip}
      \begin{tikzpicture} 
  \begin{axis}[%
    hide axis,
    scale only axis,
    width  = .1cm,
    height = 1.712\textwidth,
    xmin = 0,
    xmax = 1,
    ymin = 0,
    ymax = 1,
    point meta min = -16,
    point meta max = -5,
    colorbar,
    colorbar style = {
      yticklabel = $10^{\pgfmathparse{\tick}
        \pgfmathprintnumber\pgfmathresult}$,
      anchor = north}]
  \end{axis}
\end{tikzpicture}
    \end{minipage}%
    \hfill%
    \begin{subfigure}[t]{.45\textwidth}
      \begin{center}
        \begin{tikzpicture}
  \begin{semilogxaxis}[
    view   = {0}{90},
    width  = .7\textwidth,
    height = .35\textwidth,
    scale only axis,
    axis on top,
    xmin   = 1e-4,
    xmax   = 1e+4,
    ymin   = 1,
    ymax   = 10,
    xtick  = {1e-4, 1e-2, 1e+0, 1e+2, 1e+4},
    ytick  = {1, 3, 5, 7, 9},
    xlabel = {\small Frequency $\omega$ (rad/sec)},
    ylabel = {\small Parameter $\mu$}]
        
      \addplot graphics[xmin = 1e-4, xmax = 1e+4, ymin = 1, ymax = 10]
        {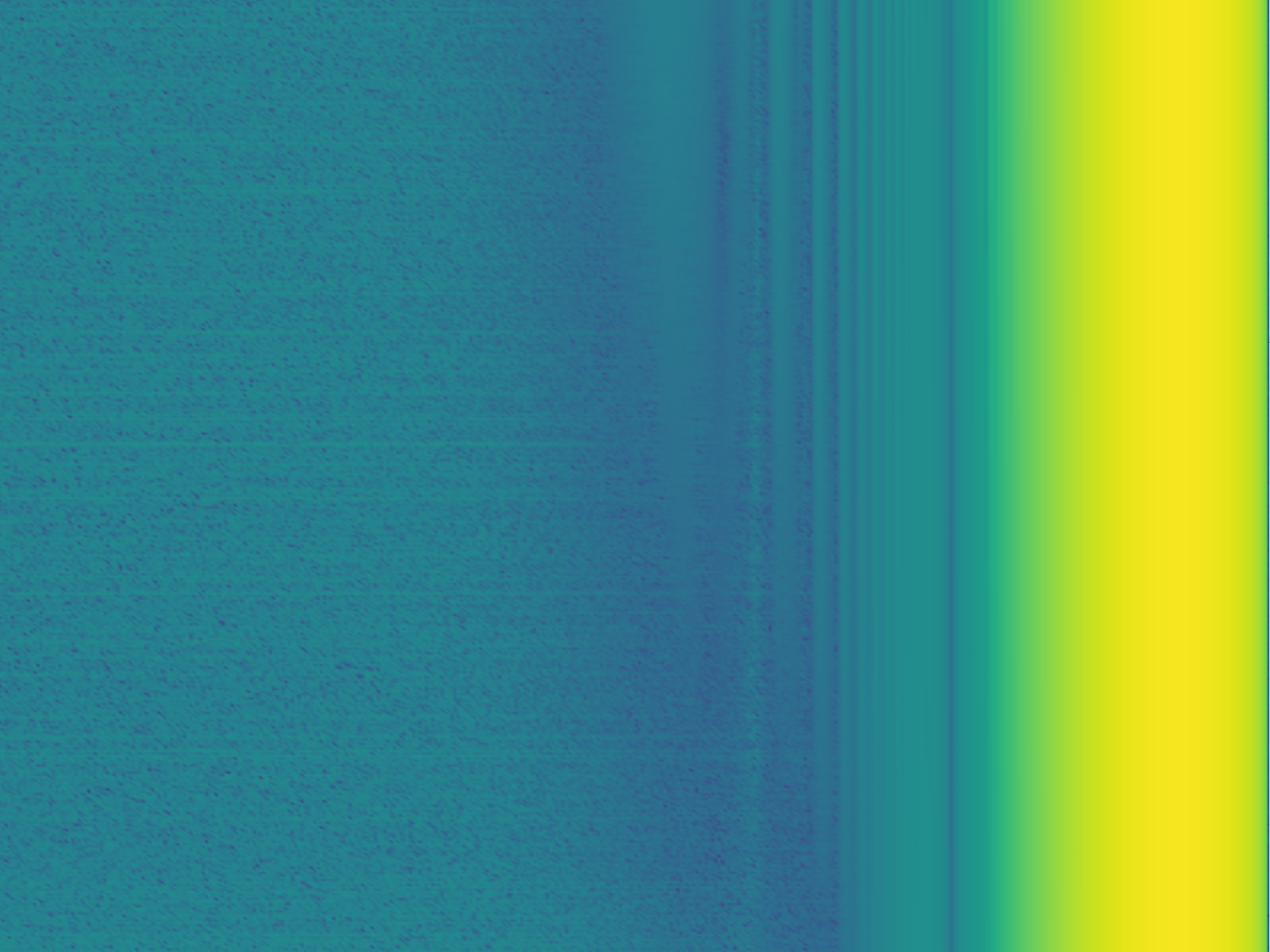};
            
  \end{semilogxaxis}
\end{tikzpicture}
        \subcaption{Relative error in the first transfer function.}
        \label{fig:time_delay_freq_g1}
      \end{center}
    \end{subfigure}
    \caption{Relative errors for the time-delay system.}
    \label{fig:time_delay}
  \end{center}
\end{figure*}

\Cref{fig:time_delay_sim} shows the relative time response
error in the output, given  by
\begin{align*}
  \err_{\td, \mtime}(t, \mu) := \frac{\lvert y(t; \mu) - \yr(t; \mu) \rvert}
    { \lvert y(t; \mu) \rvert},
\end{align*}
for $t \in [0, 10]$ and $\mu \in [1, 10]$, using the same test input signal as 
in~\cite{morGosPBetal19}, namely,
$u(t)  = 0.05 \left( \cos(10t) + \cos(5t) \right)$.
The maximum error in the time and parameter domain is 
\begin{align*}
  \max\limits_{\mu \in [1, 10]} \left( \max\limits_{t \in [0, 10]}
    \err_{\td, \mtime}(t, \mu) \right)
    \approx 9.993 \cdot 10^{-6},
\end{align*}
illustrating a high-fidelity parametric reduced model over the full parameter
domain.
\Cref{fig:time_delay_freq_g1} depicts the relative error in the first 
transfer function over the parameter range, computed as
\begin{align*}
  \err_{\td, \mfreq}(\omega_{1}, \mu) := \frac{\lvert
    G_{1}(\omega_{1}\mathrm{i}, \mu) -
    \Gr_{1}(\omega_{1}\mathrm{i}, \mu) \rvert}{\lvert 
    G_{1}(\omega_{1}\mathrm{i}, \mu)\rvert},
\end{align*}
where $\omega_{1} \in [10^{-4}, 10^{4}]$ and $\mu \in [1, 10]$.
As for the time domain error, we computed the maximum error to obtain
\begin{align*}
  \max\limits_{\mu \in [1, 10]} \left(
    \max\limits_{\omega_{1} \in [10^{-4}, 10^{4}]}
    \err_{\td, \mfreq}(\omega_{1}, \mu)
    \right)
    \approx 7.002 \cdot 10^{-6},
\end{align*}
showing the accuracy of the parametric reduced model in the frequency domain as
well.
We computed the maximum relative error in the second transfer function 
$G_{2}(s_{1}, s_{2}, \mu)$ as well to obtain
\begingroup
\small
\begin{align*}
  \max\limits_{\mu \in [1, 10]} \left(
    \max\limits_{\omega_{1}, \omega_{2} \in [10^{-4}, 10^{+4}]}
    \err_{\td, \mfreq}(\omega_{1}, \omega_{2}, \mu)
    \right) \approx 6.657 \cdot 10^{-4},
\end{align*}
\endgroup
where
\begingroup
\small
\begin{align*}
  \err_{\td, \mfreq}(\omega_{1}, \omega_{2}, \mu) :=
  \frac{\lvert G_{2}(\omega_{1}\mathrm{i}, \omega_{2}\mathrm{i}, \mu) - 
    \Gr_{2}(\omega_{1}\mathrm{i}, \omega_{2}\mathrm{i}, \mu) 
    \rvert}{\lvert G_{2}(\omega_{1}\mathrm{i}, \omega_{2}\mathrm{i}, \mu) 
    \rvert}.
\end{align*}
\endgroup
All these results show that the  structure-pre\-ser\-ving parametric 
reduced-order model is an accurate approximation of the original 
system over the full parameter domain.


\subsection{Parametric bilinear mechanical system}

As second example, we consider a parametrized version of the multi-input 
multi-output damped mass-spring system from~\cite{morBenGW20}, a special case of 
the model~\cref{eqn:bsosys}, given by
\begin{align*} 
  \begin{aligned}
    & M \ddot{x}(t;\mu) + D \dot{x}(t;\mu) + K x(t;\mu) =
      B_{\mathrm{u}}u(t) \\
    & \quad{}+{} \mu_{1} N_{\mathrm{p}, 1} x(t) u_{1}(t) + \mu_{2} 
      N_{\mathrm{p}, 2} x(t) u_{2}(t),\\
    & y(t;\mu) = C_{\mathrm{p}} x(t;\mu) 
      \dot{x}(t;\mu),
  \end{aligned}
\end{align*}
where $\mu = (\mu_1,\mu_2)$ is the parameter entering through the bilinear terms 
and all the other matrices are exactly as in~\cite{morBenGW20}, except for 
$C_{\mathrm{p}} $, which,  we set as $C_{\mathrm{p}} = [e_{2}, 
e_{n-3}]^{\trans}$, where $e_{j}$ denotes the $j$-th column of the 
$n$-dimensional identity matrix.
We have then $n = 1\,000$ masses, $m = 2$ inputs and $p = 2$ outputs.
The parameter set is $\M = [0,1] \times [0,1]$.  
Note that for $\mu = (0, 0)$, the system becomes linear.
In our setting, this parametric bilinear model corresponds to
\begin{align*}
  \begin{aligned}
    \cK(s, \mu) & = s^{2}M + s D + K,~\cB(s, \mu) = B_{\mathrm{u}},\\
    \cN(s, \mu) & = \begin{bmatrix} \mu_{1} N_{\mathrm{p}, 1} &
      \mu_{2} N_{\mathrm{p}, 2}\end{bmatrix},~\text{and}~ \cC(s, \mu) = C_{\mathrm{p}}.\\
  \end{aligned}
\end{align*}
The reduced-order model is constructed via \Cref{thm:mtxvw}
with frequency interpolation points $\{\pm 10^{-4}\mathrm{i},$
$\pm 10^{4}\mathrm{i}\}$ and the parameter interpolation points
$\{ (0, 1), (1, 0) \}$ for the first two transfer functions.
To preserve the structural properties, such as positive definiteness of the mass 
matrix, of the single matrices, we  use a one-sided projection, i.e., we choose 
$W = V$.
Since the first transfer function (the linear term) is independent of the 
parameter, some of the vectors in the construction of $V$ are redundant and
removed, yielding a structured parametric reduced-order model with $r = 40$.
We compute similar error quantities as in \Cref{subsec:delayexample}.

\Cref{fig:mimo_msd_sim} illustrates the relative time domain output error over 
the parameter range $\mu \in [0, 1]^{2}$, computed as
\begin{align*}
  \err_{\msd, \mtime}(\mu) := \max\limits_{j \in  \{ 1, 2 \}} \left(
    \max\limits_{t \in [0, 100]} \frac{\lvert y_{j}(.; \mu) - \yr_{j}(.; \mu)
    \rvert}{\lvert y_{j}(.; \mu) \rvert} \right),
\end{align*}
using the input signal $u(t)  = \begin{bmatrix} \sin(200 t) + 200 \\
-\cos(200 t) - 200 \end{bmatrix}$.
The maximum error over the full parameter range is
\begin{align*}
  \max\limits_{\mu \in [0, 1]^{2}} \err_{\msd, \mtime}(\mu)
    \approx 8.849 \cdot 10^{-5},
\end{align*}
illustrating the high accuracy of the reduced model.
\Cref{fig:mimo_msd_freq_g1} shows the relative error in the first transfer 
function approximation, i.e.,
\begin{align*}
  \err_{\msd, \mfreq}(\omega_{1}) :=
    \frac{\lVert G_{1}(\omega_{1} \mathrm{i}) - \Gr_{1}(\omega_{1} \mathrm{i}) 
    \rVert_{2}}{\lVert G(\omega \mathrm{i}) \rVert_{2}},
\end{align*}
over the frequency range $\omega_{1} \in [10^{-4}, 10^{+4}]$, with the maximum 
attained error 
\begin{align*}
  \max\limits_{\omega_{1} \in [10^{-4}, 10^{+4}]}
    \err_{\msd, \mfreq}(\omega_{1})
    \approx 1.296 \cdot 10^{-4}.
\end{align*}
This error term is independent of the parameter since the first transfer 
function does not contain the parametric bilinear terms.
We also computed the maximum relative approximation error for the second 
transfer function as
\begin{align*}
  \max\limits_{\mu \in [0, 1]^{2}} \left(
    \max\limits_{\omega_{1}, \omega_{2} \in [10^{-4}, 10^{+4}]}
    \err_{\msd, \mfreq}(\omega_{1}, \omega_{1}, \mu)
    \right) \approx 1.496 \cdot 10^{-3},
\end{align*}
where
\begin{align*}
  \err_{\msd, \mfreq}(\omega_{1}, \omega_{1}, \mu) := 
    \frac{\lVert G_{2}(\omega_{1}\mathrm{i}, \omega_{2}\mathrm{i}, \mu) - 
    \Gr_{2}(\omega_{1}\mathrm{i}, \omega_{2}\mathrm{i}, \mu) 
    \rVert_{2}}{\lVert G_{2}(\omega_{1}\mathrm{i}, \omega_{2}\mathrm{i}, \mu) 
    \rVert_{2}}.
\end{align*}
These numbers illustrate that the structured parametric approximation is a  
high-fidelity surrogate both in the frequency and time domains.


\section{Conclusions}%
\label{sec:conclusions}

We have presented a structure-preserving interpolation framework for model order
reduction of parametric bilinear systems.
We have established the subspace conditions to enforce interpolation
both in the frequency and parameter domains.
Two numerical examples illustrate that the approach is well suited for 
efficient structure-preserving model order reduction of parametric bilinear 
systems.
The presented approach covers arbitrary parameter dependencies of the 
system as well as more system structures than shown in the examples.
An important open question is the appropriate choice of interpolation 
points in the frequency as well as the parameter domains to
minimize the approximation error in some appropriate measure.

\begin{figure}[t]
\begin{center}
    \begin{subfigure}[t]{.48\textwidth}
      \begin{center}
        \vspace{0em}
        
        \begin{tikzpicture}
%
%
  \begin{axis}[
    view   = {0}{90},
    width  = .675\textwidth,
    height = .35\textwidth,
    scale only axis,
    axis on top,
    xmin   = 0,
    xmax   = 1,
    ymin   = 0,
    ymax   = 1,
    xlabel = {\small Parameter $\mu_{1}$},
    ylabel = {\small Parameter $\mu_{2}$},
    point meta min = -6.2965,
    point meta max = -2.8000,
    colorbar,
    colorbar style = {
      yticklabel = $10^{\pgfmathparse{\tick}
      \pgfmathprintnumber\pgfmathresult}$,
      at = {(1.05, .5)},
      anchor = west}
   ]
        
      \addplot graphics[xmin = 0, xmax = 1, ymin = 0, ymax = 1]
        {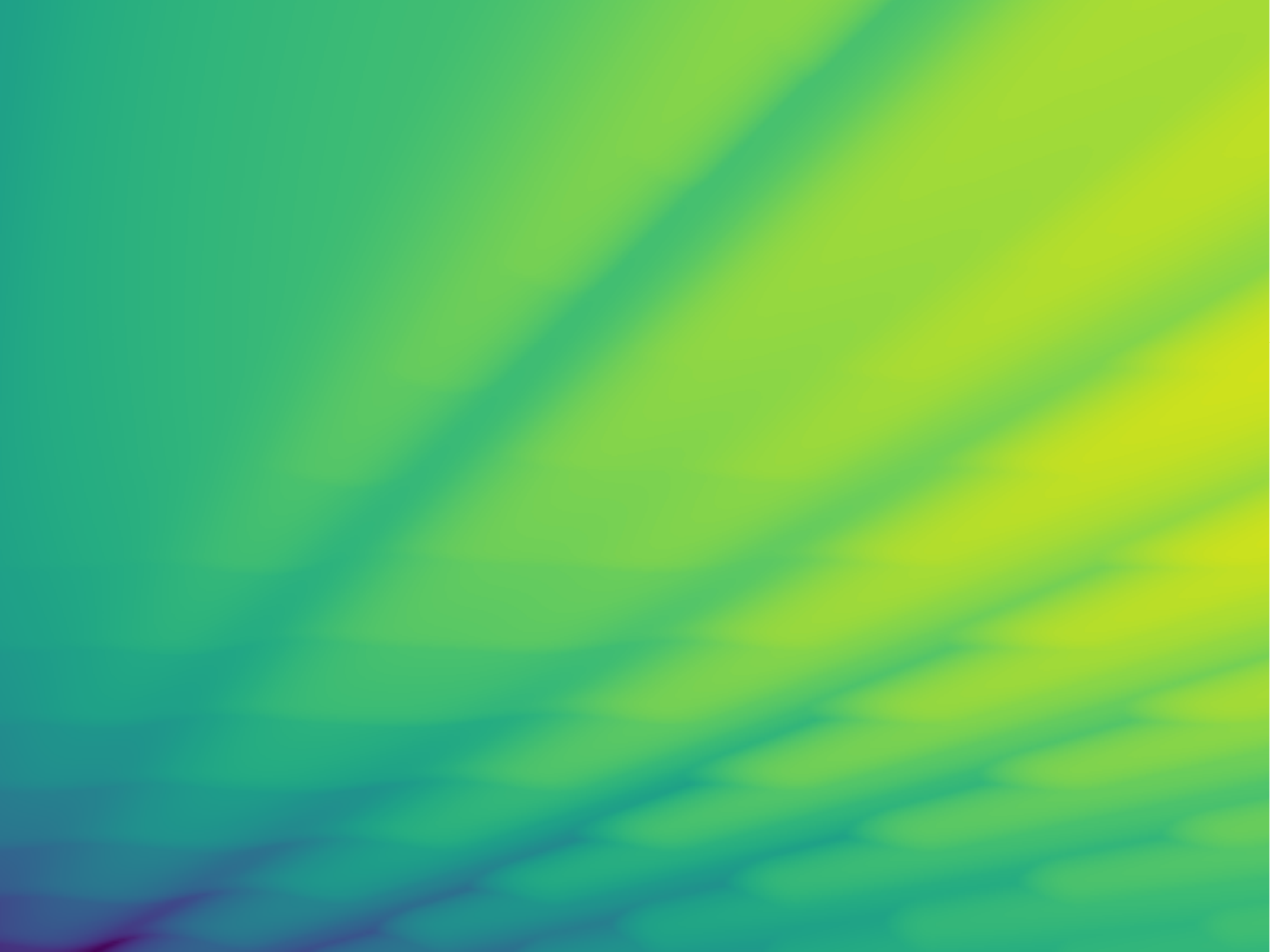};
            
  \end{axis}
\end{tikzpicture}
        \vspace{-\baselineskip}
        
        \subcaption{Relative time domain error.}
        \label{fig:mimo_msd_sim}
      \end{center}
    \end{subfigure}
    \begin{subfigure}[t]{.48\textwidth}
      \begin{center}
        \vspace{.5\baselineskip}
      
        \begin{tikzpicture}
  \pgfplotstableread{graphics/data/mimo_msd_freq_g1.dat}\tableRELERR
  
  \begin{loglogaxis}[%
    width  = .7\textwidth,
    height = .35\textwidth,
    scale only axis,
    xmin = 1e-4,
    xmax = 1e+4,
    xtick = {1e-4, 1e-2, 1e+0, 1e+2, 1e+4},
    ymin = 1e-20,
    ymax = 5e-3,
    xminorticks = false,
    yminorticks = false,
    xlabel={\small Frequency (rad/sec)},
    xlabel style = {yshift = .3em},
    ylabel = {\small Relative error},
    ylabel style = {yshift = -.5em},
    scaled x ticks = false,
    x tick label style = {/pgf/number format/fixed},
    legend style = {
      at     = {(.5, -.395)},
      anchor = north,
      minimum width = 1.2cm,
      align = center}]
    
    \addplot[dashed, myBlue, line width = 1.2pt]
      table[x index = 0, y index = 3] {\tableRELERR};
  \end{loglogaxis}
\end{tikzpicture}
        
        \subcaption{Relative error in the first transfer function.}
        \label{fig:mimo_msd_freq_g1}
      \end{center}
    \end{subfigure}
    
    \caption{Relative errors for the damped mass-spring system.}
    \label{fig:mimo_msd}
  \end{center}
\end{figure}


\section*{Acknowledgment}%
\addcontentsline{toc}{section}{Acknowledgment}

Benner and Werner were supported by the German Research Foundation
(DFG) Research Training Group 2297 
\textquotedblleft{}MathCoRe\textquotedblright{}, Magdeburg.
Gugercin was supported in parts by National Science Foundation under Grant No. 
DMS-1720257 and DMS-1819110.
Part of this material is  based upon work supported by the National Science 
Foundation under Grant No. DMS-1439786 and by the Simons Foundation Grant No. 
507536  while Gugercin and Benner were in residence at the 
Institute for Computational and Experimental Research in Mathematics in 
Providence, RI, during the \textquotedblleft{}Model and dimension reduction in 
uncertain and dynamic systems\textquotedblright{} program.


\addcontentsline{toc}{section}{References}
\bibliographystyle{plainurl}
\bibliography{bibtex/myref}

\end{document}